\newtheorem{theorem}{Theorem}[section]
\theoremstyle{plain}
\newtheorem{axiom}{Axiom}
\newtheorem{conjecture}{Conjecture}
\newtheorem{corollary}{Corollary}[section]
\newtheorem{definition}{Definition}[section]
\newtheorem{example}{Example}[section]
\newtheorem{exercise}{Exercise}
\newtheorem{lemma}{Lemma}[section]
\newtheorem{proposition}{Proposition}[section]
\newtheorem{remark}{Remark}[section]
\numberwithin{equation}{section}
\providecommand{\BOXEDSPECIAL}[4]{\hbox to #2{\raise #3\hbox to #2{\null #1\hfil}}}
\chardef\@x10\chardef\@xv60
\def\tcitime{
\def\@time{%
  \@minute\time\@hour\@minute\divide\@hour\@xv
  \ifnum\@hour<\@x 0\fi\the\@hour:%
  \multiply\@hour\@xv\advance\@minute-\@hour
  \ifnum\@minute<\@x 0\fi\the\@minute
  }}%
\def\QCTOpt[#1]#2{%
  \def\QCTOptB{#1}
  \def\QCTOptA{#2}
}
\def\QCTNOpt#1{%
  \def\QCTOptA{#1}
  \let\QCTOptB\empty
}
\def\Qct{%
  \@ifnextchar[{%
    \QCTOpt}{\QCTNOpt}
}
\def\QCBOpt[#1]#2{%
  \def\QCBOptB{#1}
  \def\QCBOptA{#2}
}
\def\QCBNOpt#1{%
  \def\QCBOptA{#1}
  \let\QCBOptB\empty
}
\def\Qcb{%
  \@ifnextchar[{%
    \QCBOpt}{\QCBNOpt}
}
\def\PrepCapArgs{%
  \ifx\QCBOptA\empty
    \ifx\QCTOptA\empty
      {}%
    \else
      \ifx\QCTOptB\empty
        {\QCTOptA}%
      \else
        [\QCTOptB]{\QCTOptA}%
      \fi
    \fi
  \else
    \ifx\QCBOptA\empty
      {}%
    \else
      \ifx\QCBOptB\empty
        {\QCBOptA}%
      \else
        [\QCBOptB]{\QCBOptA}%
      \fi
    \fi
  \fi
}
\def\GRAPHICSPS#1{%
 \ifcase\GRAPHICSTYPE
   \special{ps: #1}%
 \or
   \special{language "PS", include "#1"}%
 \fi
}%
\def\graffile#1#2#3#4#5{%
    \bgroup
    \leavevmode
    \@ifundefined{bbl@deactivate}{\def~{\string~}}{\activesoff}
    \raise -#4 \BOXTHEFRAME{%
       \BOXEDSPECIAL{#1}{#2}{#3}{#5}}%
    \egroup
}%
\def\draftbox#1#2#3#4{%
 \leavevmode\raise -#4 \hbox{%
  \frame{\rlap{\protect\tiny #1}\hbox to #2%
   {\vrule height#3 width\z@ depth\z@\hfil}%
  }%
 }%
}%
\newif\ifwasdraft
\def\GRAPHIC#1#2#3#4#5{%
 \ifnum\draft=\@ne\draftbox{#2}{#3}{#4}{#5}%
  \else\graffile{#1}{#3}{#4}{#5}{#2}%
  \fi
 }%
\def\addtoLaTeXparams#1{%
    \edef\LaTeXparams{\LaTeXparams #1}}%
\newif\ifBoxFrame \BoxFramefalse
\newif\ifOverFrame \OverFramefalse
\newif\ifUnderFrame \UnderFramefalse
\def\BOXTHEFRAME#1{%
   \hbox{%
      \ifBoxFrame
         \frame{#1}%
      \else
         {#1}%
      \fi
   }%
}
\def\doFRAMEparams#1{\BoxFramefalse\OverFramefalse\UnderFramefalse\readFRAMEparams#1\end}%
\def\readFRAMEparams#1{%
 \ifx#1\end%
  \let\next=\relax
  \else
  \ifx#1i\dispkind=\z@\fi
  \ifx#1d\dispkind=\@ne\fi
  \ifx#1f\dispkind=\tw@\fi
  \ifx#1t\addtoLaTeXparams{t}\fi
  \ifx#1b\addtoLaTeXparams{b}\fi
  \ifx#1p\addtoLaTeXparams{p}\fi
  \ifx#1h\addtoLaTeXparams{h}\fi
  \ifx#1X\BoxFrametrue\fi
  \ifx#1O\OverFrametrue\fi
  \ifx#1U\UnderFrametrue\fi
  \ifx#1w
    \ifnum\draft=1\wasdrafttrue\else\wasdraftfalse\fi
    \draft=\@ne
  \fi
  \let\next=\readFRAMEparams
  \fi
 \next
 }%
\def\IFRAME#1#2#3#4#5#6{%
      \bgroup
      \let\QCTOptA\empty
      \let\QCTOptB\empty
      \let\QCBOptA\empty
      \let\QCBOptB\empty
      #6%
      \parindent=0pt%
      \leftskip=0pt
      \rightskip=0pt
      \setbox0 = \hbox{\QCBOptA}%
      \@tempdima = #1\relax
      \ifOverFrame
          \typeout{This is not implemented yet}%
          \show\HELP
      \else
         \ifdim\wd0>\@tempdima
            \advance\@tempdima by \@tempdima
            \ifdim\wd0 >\@tempdima
               \textwidth=\@tempdima
               \setbox1 =\vbox{%
                  \noindent\hbox to \@tempdima{\hfill\GRAPHIC{#5}{#4}{#1}{#2}{#3}\hfill}\\%
                  \noindent\hbox to \@tempdima{\parbox[b]{\@tempdima}{\QCBOptA}}%
               }%
               \wd1=\@tempdima
            \else
               \textwidth=\wd0
               \setbox1 =\vbox{%
                 \noindent\hbox to \wd0{\hfill\GRAPHIC{#5}{#4}{#1}{#2}{#3}\hfill}\\%
                 \noindent\hbox{\QCBOptA}%
               }%
               \wd1=\wd0
            \fi
         \else
            \ifdim\wd0>0pt
              \hsize=\@tempdima
              \setbox1 =\vbox{%
                \unskip\GRAPHIC{#5}{#4}{#1}{#2}{0pt}%
                \break
                \unskip\hbox to \@tempdima{\hfill \QCBOptA\hfill}%
              }%
              \wd1=\@tempdima
           \else
              \hsize=\@tempdima
              \setbox1 =\vbox{%
                \unskip\GRAPHIC{#5}{#4}{#1}{#2}{0pt}%
              }%
              \wd1=\@tempdima
           \fi
         \fi
         \@tempdimb=\ht1
         \advance\@tempdimb by \dp1
         \advance\@tempdimb by -#2%
         \advance\@tempdimb by #3%
         \leavevmode
         \raise -\@tempdimb \hbox{\box1}%
      \fi
      \egroup%
}%
\def\DFRAME#1#2#3#4#5{%
 \begin{center}
     \let\QCTOptA\empty
     \let\QCTOptB\empty
     \let\QCBOptA\empty
     \let\QCBOptB\empty
     \ifOverFrame 
        #5\QCTOptA\par
     \fi
     \GRAPHIC{#4}{#3}{#1}{#2}{\z@}
     \ifUnderFrame 
        \nobreak\par\nobreak#5\QCBOptA
     \fi
 \end{center}%
 }%
\def\FFRAME#1#2#3#4#5#6#7{%
 \begin{figure}[#1]%
  \let\QCTOptA\empty
  \let\QCTOptB\empty
  \let\QCBOptA\empty
  \let\QCBOptB\empty
  \ifOverFrame
    #4
    \ifx\QCTOptA\empty
    \else
      \ifx\QCTOptB\empty
        \caption{\QCTOptA}%
      \else
        \caption[\QCTOptB]{\QCTOptA}%
      \fi
    \fi
    \ifUnderFrame\else
      \label{#5}%
    \fi
  \else
    \UnderFrametrue%
  \fi
  \begin{center}\GRAPHIC{#7}{#6}{#2}{#3}{\z@}\end{center}%
  \ifUnderFrame
    #4
    \ifx\QCBOptA\empty
      \caption{}%
    \else
      \ifx\QCBOptB\empty
        \caption{\QCBOptA}%
      \else
        \caption[\QCBOptB]{\QCBOptA}%
      \fi
    \fi
    \label{#5}%
  \fi
  \end{figure}%
 }%
\def\makeactives{
  \catcode`\"=\active
  \catcode`\;=\active
  \catcode`\:=\active
  \catcode`\'=\active
  \catcode`\~=\active
}
   \gdef\activesoff{%
      \def"{\string"}
      \def;{\string;}
      \def:{\string:}
      \def'{\string'}
    }
\def\FRAME#1#2#3#4#5#6#7#8{%
 \bgroup
 \ifnum\draft=\@ne
   \wasdrafttrue
 \else
   \wasdraftfalse%
 \fi
 \def\LaTeXparams{}%
 \dispkind=\z@
 \def\LaTeXparams{}%
 \doFRAMEparams{#1}%
 \ifnum\dispkind=\z@\IFRAME{#2}{#3}{#4}{#7}{#8}{#5}\else
  \ifnum\dispkind=\@ne\DFRAME{#2}{#3}{#7}{#8}{#5}\else
   \ifnum\dispkind=\tw@
    \edef\@tempa{\noexpand\FFRAME{\LaTeXparams}}%
    \@tempa{#2}{#3}{#5}{#6}{#7}{#8}%
    \fi
   \fi
  \fi
  \ifwasdraft\draft=1\else\draft=0\fi{}%
  \egroup
 }%
\def\TEXUX#1{"texux"}
\long\def\QQQ#1#2{%
     \long\expandafter\def\csname#1\endcsname{#2}}%
\long\def\QQA#1#2{}%
\newcommand{\QTR}[2]{\csname text#1\endcsname{#2}}
\def\EXPAND#1[#2]#3{}%
\def\NOEXPAND#1[#2]#3{}%
\def\LaTeXparent#1{}%
\def\ChildStyles#1{}%
\def\ChildDefaults#1{}%
\def\QTagDef#1#2#3{}%
  \providecommand{\UNICODE}[2][]{}
\def\QQfnmark#1{\footnotemark}
 \def\abstract{%
  \if@twocolumn
   \section*{Abstract (Not appropriate in this style!)}%
   \else \small 
   \begin{center}{\bf Abstract\vspace{-.5em}\vspace{\z@}}\end{center}%
   \quotation 
   \fi
  }%
   \def\registered{\relax\ifmmode{}\r@gistered
                    \else$\m@th\r@gistered$\fi}%
 \def\r@gistered{^{\ooalign
  {\hfil\raise.07ex\hbox{$\scriptstyle\rm\text{R}$}\hfil\crcr
  \mathhexbox20D}}}}{}%
\newdimen\theight
\def\Column{%
 \vadjust{\setbox\z@=\hbox{\scriptsize\quad\quad tcol}%
  \theight=\ht\z@\advance\theight by \dp\z@\advance\theight by \lineskip
  \kern -\theight \vbox to \theight{%
   \rightline{\rlap{\box\z@}}%
   \vss
   }%
  }%
 }%
\def\qed{%
 \ifhmode\unskip\nobreak\fi\ifmmode\ifinner\else\hskip5\p@\fi\fi
 \hbox{\hskip5\p@\vrule width4\p@ height6\p@ depth1.5\p@\hskip\p@}%
 }%
\def\miss{\hbox{\vrule height2\p@ width 2\p@ depth\z@}}%
\def\tcol#1{{\baselineskip=6\p@ \vcenter{#1}} \Column}  %
\def\newfmtname{LaTeX2e}
  \DeclareOldFontCommand{\rm}{\normalfont\rmfamily}{\mathrm}
  \DeclareOldFontCommand{\sf}{\normalfont\sffamily}{\mathsf}
  \DeclareOldFontCommand{\tt}{\normalfont\ttfamily}{\mathtt}
  \DeclareOldFontCommand{\bf}{\normalfont\bfseries}{\mathbf}
  \DeclareOldFontCommand{\it}{\normalfont\itshape}{\mathit}
  \DeclareOldFontCommand{\sl}{\normalfont\slshape}{\@nomath\sl}
  \DeclareOldFontCommand{\sc}{\normalfont\scshape}{\@nomath\sc}
  \newcounter{equationnumber}  
  \def\mathletters{%
     \addtocounter{equation}{1}
     \edef\@currentlabel{\theequation}%
     \setcounter{equationnumber}{\c@equation}
     \setcounter{equation}{0}%
     \edef\theequation{\@currentlabel\noexpand\alph{equation}}%
  }
    \def\BibTeX{{\rm B\kern-.05em{\sc i\kern-.025em b}\kern-.08em
                 T\kern-.1667em\lower.7ex\hbox{E}\kern-.125emX}}}{}%
\def\AmS{{\protect\usefont{OMS}{cmsy}{m}{n}%
                A\kern-.1667em\lower.5ex\hbox{M}\kern-.125emS}}}{}%
\def\@@eqncr{\let\@tempa\relax
    \ifcase\@eqcnt \def\@tempa{& & &}\or \def\@tempa{& &}%
      \else \def\@tempa{&}\fi
     \@tempa
     \if@eqnsw
        \iftag@
           \@taggnum
        \else
           \@eqnnum\stepcounter{equation}%
        \fi
     \fi
     \global\tag@false
     \global\@eqnswtrue
     \global\@eqcnt\z@\cr}
\def\TCItag{\@ifnextchar*{\@TCItagstar}{\@TCItag}}
\def\@TCItag#1{%
    \global\tag@true
    \global\def\@taggnum{(#1)}}
\def\@TCItagstar*#1{%
    \global\tag@true
    \global\def\@taggnum{#1}}
\begin{document}
\title[]{on symplectic self-adjointness of Hamiltonian operator matrices}
\author{Alatancang Chen, Guohai Jin,  and Deyu Wu}
\address{School of Mathematical Sciences\\ Inner Mongolia University\\ Hohhot\\ 010021\\ China}
\email{alatanca@imu.edu.cn, ghjin2006@gmail.com, wudeyu2585@163.com}
\address{The corresponding author: Guohai Jin}
\keywords{symplectic elasticity, symplectic self-adjoint, Hamiltonian operator matrix}
\subjclass[2010]{47A05, 47B25, 47E05}
\thanks{This paper is in final form and no version of it will be submitted
for publication elsewhere.}
\thanks{Fax:+86-471-4991650}

\begin{abstract}
Symplectic self-adjointness of Hamiltonian operator matrices is studied,
which is important to symplectic elasticity and optimal control.
For the cases of diagonal domain and off-diagonal domain,
necessary and sufficient conditions are shown.
The proofs use Frobenius-Schur fractorizations of unbounded operator matrices.
Under additional assumptions, sufficient conditions based on perturbation method are obtained.
The theory is applied to a problem in symplectic elasticity.
\end{abstract}
\maketitle

\section{introduction}
There are a number of very interesting ways that \emph{Hamiltonian operator matrices} (see Definition \ref{def2.2} below) can arise.
We mention a few.
First, many linear boundary value problems in mathematical physics can be written as the Hamiltonian system
(or Hamiltonian equation) $\dot{u}=Hu+f$, where
\begin{align}\label{eq1.1}
H=\left(
    \begin{array}{cc}
      A & B \\
      C & -A^* \\
    \end{array}
  \right)
\end{align}
is a Hamiltonian operator matrix acting on the product space $X\times X$ of some Hilbert space $X$,
so that the solvability of the original boundary value problem is reduced to spectral properties of the Hamiltonian operator matrix $H$,
see e.g. \cite{Atk, Krall2002} for ordinary differential equations and \cite{CM, Olver, ZAZ} for partial differential equations.
This is a typical case in \emph{symplectic elasticity}.
In elasticity, symplectic approach (i.e., Hamiltonian system approach) was first applied in the
early 1990s by Professor Wanxie Zhong,
see \cite{LX,YZL,Zhong} and the references therein.
The new approach is efficient for solving basic problems in solid mechanics which have long
been bottlenecks in the history of elasticity and, moreover, analytical solutions could
be obtained by expansion of eigenfunctions.
Second, Hamiltonian operator matrices also arise in theory of optimal control.
It is well known that the solutions $U$ of the Riccati equation
$$A^*U+UA+UBU-C=0$$
are in one-to-one correspondence with graph subspaces that are invariant under the operator matrix $H$ given by \eqref{eq1.1},
where $A, B, C$ are unbounded linear operators and $B, C$ are nonnegative,
see e.g. \cite{TW, Wyss2011} and the references therein.
Therefore, spectral properties of Hamiltonian operator matrices have drawn a lot of interest,
see \cite{AHF,ADG,AKK,KZ,Ku1,KM,LRW,QC1,QC2,SS,WA}.

The property \emph{symplectic self-adjointness} (see Definition \ref{def2.3} below)
is distinctive for certain Hamiltonian operator matrices.
It is only for symplectic self-adjoint Hamiltonian operator matrices that the spectral theorems hold \cite{AHF}
and it is only symplectic self-adjoint Hamiltonian operator matrices that may be invertible
which is sometimes important in the investigation of Hamiltonian equations \cite{Ku1}.
This paper is devoted to studying methods for proving that Hamiltonian operator matrices are symplectic self-adjoint.
For the case of $\mathcal{D}(H)=\mathcal{D}(A)\times\mathcal{D}(A^*)$ or $\mathcal{D}(H)=\mathcal{D}(B)\times\mathcal{D}(D)$,
Wu and Alatancang \cite{WA} proved the following basic perturbation result.
\begin{theorem}\label{th1.1}
Let
$$H=\left(
      \begin{array}{cc}
        A & B \\
        C & -A^* \\
      \end{array}
    \right)$$
be a Hamiltonian operator matrix. Then $H$ is symplectic self-adjoint if one of the following holds:
\begin{enumerate}
\item $C$ is $A$-bounded and $B$ is $A^*$-bounded with both relative bounds less than $1$,
\item $A$ is $C$-bounded and $A^*$ is $B$-bounded with both relative bounds less than $1$.
\end{enumerate}

\end{theorem}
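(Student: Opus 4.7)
The plan is to reduce symplectic self-adjointness of $H$ to ordinary self-adjointness of $JH$, where $J=\bigl(\begin{smallmatrix}0 & I\\ -I & 0\end{smallmatrix}\bigr)$, and then to apply the Kato--Rellich perturbation theorem. In both cases I split $H$ into an unperturbed diagonal (resp.\ off-diagonal) part and treat the remaining block as a relatively bounded symmetric perturbation, using the Hamiltonian structure (in particular the self-adjointness of $B$ and $C$) to ensure that the perturbation is symmetric after multiplication by $J$.

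For case (1), write $H=H_0+V$ with $H_0=\mathrm{diag}(A,-A^*)$ on $\mathcal{D}(A)\times\mathcal{D}(A^*)$ and $V=\bigl(\begin{smallmatrix}0 & B\\ C & 0\end{smallmatrix}\bigr)$. Then
\[
JH_0=\begin{pmatrix} 0 & -A^*\\ -A & 0\end{pmatrix},\qquad JV=\begin{pmatrix} C & 0\\ 0 & -B\end{pmatrix};
\]
the first is self-adjoint on $\mathcal{D}(A)\times\mathcal{D}(A^*)$ because $A$ is densely defined and closed, while the second is symmetric because $B,C$ are self-adjoint. For $(x,y)\in\mathcal{D}(A)\times\mathcal{D}(A^*)$ one has $\|JH_0(x,y)\|^2=\|Ax\|^2+\|A^*y\|^2$ and $\|JV(x,y)\|^2=\|Cx\|^2+\|By\|^2$; the hypotheses $\|Cx\|\le a\|Ax\|+b\|x\|$ and $\|By\|\le a'\|A^*y\|+b'\|y\|$ with $a,a'<1$, combined with the elementary inequality $(p+q)^2\le(1+\varepsilon)p^2+(1+\varepsilon^{-1})q^2$, yield
\[
\|JV(x,y)\|\le\sqrt{1+\varepsilon}\,\max(a,a')\,\|JH_0(x,y)\|+C_\varepsilon\|(x,y)\|.
\]
Choosing $\varepsilon$ small enough makes the coefficient of $\|JH_0(x,y)\|$ strictly less than $1$, so $JV$ is $JH_0$-bounded with relative bound less than $1$. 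By Kato--Rellich, $JH=JH_0+JV$ is self-adjoint on $\mathcal{D}(A)\times\mathcal{D}(A^*)=\mathcal{D}(H)$; hence $H$ is symplectic self-adjoint.

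For case (2), interchange the roles: set $H_0'=\bigl(\begin{smallmatrix}0 & B\\ C & 0\end{smallmatrix}\bigr)$ and $V'=\mathrm{diag}(A,-A^*)$, so that $JH_0'=\mathrm{diag}(C,-B)$ is self-adjoint on $\mathcal{D}(C)\times\mathcal{D}(B)$, while $JV'=\bigl(\begin{smallmatrix}0 & -A^*\\ -A & 0\end{smallmatrix}\bigr)$ is symmetric. The assumed bounds $\|Ax\|\le a\|Cx\|+b\|x\|$ and $\|A^*y\|\le a'\|By\|+b'\|y\|$ with $a,a'<1$ give, by the same $(1+\varepsilon)$-trick, that $JV'$ is $JH_0'$-bounded with relative bound less than $1$; Kato--Rellich then yields self-adjointness of $JH$ on $\mathcal{D}(C)\times\mathcal{D}(B)=\mathcal{D}(H)$.

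The argument is essentially routine; the only delicate point is that the componentwise relative bounds $a,a'<1$ must be amalgamated into a single relative bound $<1$ for the $2\times 2$ operator matrix, which is exactly what the $\sqrt{1+\varepsilon}\max(a,a')$ estimate accomplishes. Implicit in the reduction is also the fact that symplectic self-adjointness of $H$ is equivalent to self-adjointness of $JH$, together with the observation that $JV$ (resp.\ $JV'$) is genuinely symmetric and not merely formally so, which uses the Hamiltonian hypothesis that $B$ and $C$ are self-adjoint rather than just symmetric.
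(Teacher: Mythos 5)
Your proof is correct. Note first that the paper itself does not prove Theorem \ref{th1.1}: it is quoted from Wu and Alatancang \cite{WA}, and the paper's own contribution is to reprove and extend it via Frobenius--Schur factorizations (Theorems \ref{th3.1} and \ref{th3.2}) combined with the Hess--Kato theorem on adjoints of sums (Lemma \ref{lemA1}), applied not to $JH$ directly but to the Schur complements $A+\overline{\lambda}+B(A^*-\overline{\lambda})^{-1}C$, etc. Your route is genuinely different and more elementary: you observe that $JH_0=\bigl(\begin{smallmatrix}0 & -A^*\\ -A & 0\end{smallmatrix}\bigr)$ (resp.\ $JH_0'=\mathrm{diag}(C,-B)$) is self-adjoint on the diagonal (resp.\ off-diagonal) domain because $A$ is closed and densely defined (resp.\ because $B,C$ are self-adjoint), that the complementary block becomes symmetric after multiplication by $J$, and that the componentwise relative bounds $a,a'<1$ amalgamate into a single relative bound $\sqrt{1+\varepsilon}\max(a,a')<1$ via the $(p+q)^2\le(1+\varepsilon)p^2+(1+\varepsilon^{-1})q^2$ trick; Kato--Rellich then finishes. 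All the individual steps check out, including the domain identifications $\mathcal{D}(H)=\mathcal{D}(A)\times\mathcal{D}(A^*)$ and $\mathcal{D}(H)=\mathcal{D}(C)\times\mathcal{D}(B)$ forced by the relative-boundedness hypotheses. What you gain is a short, self-contained argument using only the classical Kato--Rellich theorem; what the paper's heavier machinery buys is strictly more: the necessary-and-sufficient (non-perturbative) characterizations of Theorems \ref{th3.1} and \ref{th3.2}, and the refined perturbation results of Corollaries \ref{cor3.1}--\ref{cor3.4}, where one relative bound is allowed to equal $1$ or $0$ under accretivity or self-adjointness assumptions on $A$ --- borderline cases that the plain Kato--Rellich argument cannot reach because the amalgamated relative bound would no longer be strictly less than $1$.
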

In Section \ref{sec3}, by the \emph{Frobenius-Schur fractorization} technique,
we shall establish some necessary and sufficient conditions for a Hamiltonian operator matrix to be symplectic self-adjoint
which extend Theorem \ref{th1.1} to non-perturbation cases.
Moreover, we also obtain some perturbation results under the assumptions that are different from or weaker than those in Theorem \ref{th1.1}.
Finally, we shall apply the new results to a problem in symplectic elasticity which seems cannot be resolved by the previously published methods,
see Section \ref{sec4} below.

\section{preliminaries}\label{sec2}

Our notion of an operator matrix is taken from \cite[p. 97]{Wyss2008}, see also \cite[Section 2.2]{Tre} for another definition.
\begin{definition}\label{def2.1}
Let $X_1, X_2$ be Banach spaces and consider linear operators $A:\mathcal{D}(A)\subset X_1\to X_1,
B:\mathcal{D}(B)\subset X_2\to X_1, C:\mathcal{D}(C)\subset X_1\to X_2$, and $D:\mathcal{D}(D)\subset X_2\to X_2$.
Then the matrix
$$H=\left(
      \begin{array}{cc}
        A & B \\
        C & D \\
      \end{array}
    \right)$$
is called a block operator matrix on $X_1\times X_2$.
It induces a linear operator on $X_1\times X_2$ which is also denoted by $H$:
\begin{align*}
\mathcal{D}(H):&=(\mathcal{D}(A)\cap\mathcal{D}(C))\times(\mathcal{D}(B)\cap\mathcal{D}(D)),\\
H\left(
   \begin{array}{c}
     x_1 \\
     x_2 \\
   \end{array}
 \right):&=\left(
             \begin{array}{c}
               Ax_1+Bx_2 \\
               Cx_1+Dx_2 \\
             \end{array}
           \right) \mbox{~for~} \left(
                                  \begin{array}{c}
                                    x_1 \\
                                    x_2 \\
                                  \end{array}
                                \right)\in\mathcal{D}(H).
\end{align*}
\end{definition}

The following \emph{Frobenius-Schur fractorization} will play an important role in the proofs of our main theorems.
\begin{lemma}(\cite[Section 2.2]{Tre})\label{lem2.1}
Let
\begin{align*}
H=\left(
    \begin{array}{cc}
      A & B \\
      C & D \\
    \end{array}
  \right)
\end{align*}
be a block operator matrix acting on the product space $X\times X$ of some Banach space $X$.
\begin{enumerate}
\item Suppose that $D$ is closed with $\rho(D)\neq\emptyset$, and that $\mathcal{D}(D)\subset\mathcal{D}(B)$.
Then for some (and hence for all) $\lambda\in\rho(D)$,
\begin{align*}
H-\lambda=&\left(
            \begin{array}{cc}
              I & B(D-\lambda)^{-1} \\
              0 & I \\
            \end{array}
          \right)\left(
                   \begin{array}{cc}
                     S_1(\lambda) & 0 \\
                     0 & D-\lambda \\
                   \end{array}
                 \right)\\
                 &\left(
                          \begin{array}{cc}
                            I & 0 \\
                            (D-\lambda)^{-1}C & I \\
                          \end{array}
                        \right),
\end{align*}
where $S_1(\lambda):=A-\lambda-B(D-\lambda)^{-1}C$ is the first Schur complement of $H$ with domain $\mathcal{D}(S_1(\lambda))=\mathcal{D}(A)\cap\mathcal{D}(C)$.
\item Suppose that $A$ is closed with $\rho(A)\neq\emptyset$, and that $\mathcal{D}(A)\subset\mathcal{D}(C)$.
Then for some (and hence for all) $\lambda\in\rho(A)$,
\begin{align*}
H-\lambda=&\left(
            \begin{array}{cc}
              I & 0 \\
              C(A-\lambda)^{-1} & I \\
            \end{array}
          \right)\left(
                   \begin{array}{cc}
                     A-\lambda & 0 \\
                     0 & S_2(\lambda) \\
                   \end{array}
                 \right)\\
                 &\left(
                          \begin{array}{cc}
                            I & (A-\lambda)^{-1}B \\
                            0 & I \\
                          \end{array}
                        \right),
\end{align*}
where $S_2(\lambda):=D-\lambda-C(A-\lambda)^{-1}B$ is the second Schur complement of $H$ with domain $\mathcal{D}(S_2(\lambda))=\mathcal{D}(B)\cap\mathcal{D}(D)$.
\end{enumerate}
\end{lemma}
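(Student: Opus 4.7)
The plan is to prove each factorization by direct verification: treat both sides as operators on $X\times X$ and check (i) the domains coincide, and (ii) the action on elements of the common domain agrees. The algebra itself is not deep---once all compositions are meaningful, the $B(D-\lambda)^{-1}$ (respectively $(A-\lambda)^{-1}B$) terms cancel exactly against the definition of the Schur complement. The role of $\rho(D)\neq\emptyset$ is to provide $(D-\lambda)^{-1}$ as a bounded operator on $X$ with range in $\mathcal{D}(D)$, and the role of the hypothesis $\mathcal{D}(D)\subset\mathcal{D}(B)$ is to make $B(D-\lambda)^{-1}$ an everywhere defined operator, so that the upper-triangular left factor acts on all of $X\times X$.

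For part (1), I would fix $\lambda\in\rho(D)$ and analyze the right-hand side by applying its three factors from right to left to an arbitrary $(x_1,x_2)$. The right factor is defined iff $x_1\in\mathcal{D}(C)$, and sends $(x_1,x_2)$ to $(x_1,\,x_2+(D-\lambda)^{-1}Cx_1)$. The middle factor then requires $x_1\in\mathcal{D}(A)\cap\mathcal{D}(C)=\mathcal{D}(S_1(\lambda))$ in the first slot and $x_2+(D-\lambda)^{-1}Cx_1\in\mathcal{D}(D)$ in the second; since $(D-\lambda)^{-1}Cx_1$ already lies in $\mathcal{D}(D)$, this condition reduces to $x_2\in\mathcal{D}(D)$. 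The left factor imposes no further restriction, by the remark above. Combining, the domain of the right-hand side is $(\mathcal{D}(A)\cap\mathcal{D}(C))\times\mathcal{D}(D)$, and the inclusion $\mathcal{D}(D)\subset\mathcal{D}(B)$ identifies this with $\mathcal{D}(H)=\mathcal{D}(H-\lambda)$. The equality of actions is then a short calculation: after the middle factor one has $(S_1(\lambda)x_1,\,Cx_1+(D-\lambda)x_2)$, and the left factor converts the first component into $S_1(\lambda)x_1+B(D-\lambda)^{-1}Cx_1+Bx_2=(A-\lambda)x_1+Bx_2$ using the definition of $S_1(\lambda)$, while the second remains $Cx_1+(D-\lambda)x_2$. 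This is exactly $(H-\lambda)(x_1,x_2)$. The ``for some (and hence for all)'' qualifier is automatic, since the verification is carried out for an arbitrary $\lambda\in\rho(D)$.

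Part (2) is handled by an entirely symmetric argument, interchanging the roles of $A$ and $D$, of $B$ and $C$, and of the upper/lower triangular factors; the hypothesis $\mathcal{D}(A)\subset\mathcal{D}(C)$ now plays the role that $\mathcal{D}(D)\subset\mathcal{D}(B)$ played before, ensuring that $C(A-\lambda)^{-1}$ is everywhere defined on $X$. The main obstacle I anticipate is nothing algebraic but the careful bookkeeping of domains: one must verify that the intersection of domain constraints produced by the three successive factors reproduces exactly $\mathcal{D}(H)$, neither enlarging nor shrinking it. Each of the two domain inclusions appearing in the hypotheses is used precisely at this step, and the statement would fail without them.
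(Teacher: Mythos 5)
Your proof is correct. The paper offers no proof of this lemma---it is quoted from Tretter's book---and your direct right-to-left verification, with the domain bookkeeping (the range of $(D-\lambda)^{-1}$ lying in $\mathcal{D}(D)\subset\mathcal{D}(B)$ so that $\mathcal{D}(B)\cap\mathcal{D}(D)=\mathcal{D}(D)$, and the exact cancellation of $B(D-\lambda)^{-1}Cx_1$ against the Schur complement), is precisely the standard argument given in that reference.
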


We use the following definition of a Hamiltonian operator matrix, see also \cite{AHF, ADG, Wyss2011} for other definitions.
\begin{definition}\label{def2.2}
Let $X$ be a complex Hilbert space.
A Hamiltonian operator matrix is a block operator matrix
\begin{align*}
H=\left(
    \begin{array}{cc}
      A & B \\
      C & -A^* \\
    \end{array}
  \right)
\end{align*}
acting on $X\times X$ with closed densely defined operators $A, B, C$
such that $B, C$ are self-adjoint and $H$ is densely defined.
\end{definition}
For a Hamiltonian operator matrix $H$, one readily checks that $JH\subset (JH)^*$, where
\begin{align*}
J=\left(
    \begin{array}{cc}
      0 & I \\
      -I & 0 \\
    \end{array}
  \right)
\end{align*}
is the unit symplectic operator matrix \cite[p. 11]{YZL}.
\begin{definition}(\cite{WA})\label{def2.3}
Let $H$ be a Hamiltonian operator matrix.
If $JH=(JH)^*$, then $H$ is called a \emph{symplectic self-adjoint Hamiltonian operator matrix}.
\end{definition}

\section{main results}\label{sec3}

In this section $H=\left(
    \begin{array}{cc}
      A & B \\
      C & -A^* \\
    \end{array}
  \right)$
will denote a Hamiltonian operator on $X\times X$
and $J=\left(
                \begin{array}{cc}
                  0 & I \\
                  -I & 0 \\
                \end{array}
              \right)$
will denote the unit symplectic operator matrix.

Firstly, we give a sufficient and necessary condition for a Hamiltonian operator matrix to be symplectic self-adjoint.
\begin{proposition}\label{prop3.1}
$H$ is symplectic self-adjoint if and only if $\mathcal{R}(H\pm iJ)=X\times X$.
\end{proposition}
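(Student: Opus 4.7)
The plan is to reduce the claim to the standard deficiency-index criterion for self-adjointness of symmetric operators. We already know from the remark following Definition \ref{def2.2} that $JH\subset(JH)^*$, so $JH$ is a densely defined symmetric operator on $X\times X$ (densely defined because $\mathcal{D}(JH)=\mathcal{D}(H)$ is dense and $J$ is a bijection). The classical criterion states that a densely defined symmetric operator $T$ is self-adjoint if and only if $\mathcal{R}(T\pm iI)$ equals the whole space. Applying this to $T=JH$ reduces the problem to showing
\[
\mathcal{R}(JH\pm iI)=X\times X\ \Longleftrightarrow\ \mathcal{R}(H\pm iJ)=X\times X.
\]

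The key algebraic observation is that $J^2=-I$, so $J^{-1}=-J$, and $J$ is a unitary bijection of $X\times X$. Consequently I would write
\[
JH+iI=J\bigl(H+iJ^{-1}\bigr)=J(H-iJ),\qquad JH-iI=J(H+iJ),
\]
so that $\mathcal{R}(JH\pm iI)=J\,\mathcal{R}(H\mp iJ)$. Since $J$ maps $X\times X$ onto itself, $\mathcal{R}(JH+iI)=X\times X$ is equivalent to $\mathcal{R}(H-iJ)=X\times X$, and similarly for the other sign. Combining the two equivalences yields
\[
\mathcal{R}(JH\pm iI)=X\times X\iff \mathcal{R}(H\pm iJ)=X\times X,
\]
and the classical criterion then gives the desired equivalence with $JH=(JH)^*$, i.e.\ with $H$ being symplectic self-adjoint.

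I do not anticipate a serious obstacle here: the only points requiring care are verifying that $JH$ is densely defined symmetric (which follows from the paper's setup and the invertibility of $J$), and writing out the identities $JH\pm iI=J(H\mp iJ)$ correctly using $J^{-1}=-J$. The rest is a direct invocation of the standard self-adjointness criterion.
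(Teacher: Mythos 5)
Your argument is correct and is essentially the paper's own proof: both reduce the statement to the classical range criterion for self-adjointness of the symmetric operator $JH$ (the paper cites \cite[Theorem VIII.3]{RS1981}) and then use $J^{*}=J^{-1}=-J$ to identify $\mathcal{R}(JH\mp iI)$ with $J\,\mathcal{R}(H\pm iJ)=\mathcal{R}(H\pm iJ)$. No gaps; the identities $JH\pm iI=J(H\mp iJ)$ are exactly the computation the paper leaves implicit.
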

\begin{proof}
Since $JH\subset(JH)^*$ and $J^*=J^{-1}=-J$,
the assertion follows from the well-known fact \cite[Theorem VIII.3]{RS1981} that
$$JH=(JH)^*\iff\mathcal{R}(JH\mp iI)=X\times X.$$
\end{proof}

Secondly, we consider the case $\mathcal{D}(H)=\mathcal{D}(A)\times\mathcal{D}(A^*)$.
\begin{theorem}\label{th3.1}
Suppose that $\mathcal{D}(H)=\mathcal{D}(A)\times\mathcal{D}(A^*)$, and that $\rho(A)\neq\emptyset$.
Then the following statements are equivalent:
\begin{enumerate}
\item $H$ is symplectic self-adjoint,
\item $A^*+\lambda+C(A-\lambda)^{-1}B=(A+\overline{\lambda}+B(A^*-\overline{\lambda})^{-1}C)^*$
for some (and hence for all) $\lambda\in\rho(A)$,
\item $A+\overline{\lambda}+B(A^*-\overline{\lambda})^{-1}C=(A^*+\lambda+C(A-\lambda)^{-1}B)^*$
for some (and hence for all) $\lambda\in\rho(A)$.
\end{enumerate}
\end{theorem}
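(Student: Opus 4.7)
My plan is to reduce symplectic self-adjointness of $H$ to the identification of $H^*$ with a specific auxiliary Hamiltonian operator matrix, and then to unfold that operator equation into a scalar condition on the Schur complements via the Frobenius--Schur factorizations of Lemma \ref{lem2.1}.

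\textbf{Reduction step.} Introduce the auxiliary operator matrix $\tilde H := \begin{pmatrix} A^* & C \\ B & -A \end{pmatrix}$ with its natural block-operator domain $\mathcal{D}(\tilde H) = \mathcal{D}(A^*) \times \mathcal{D}(A)$; since $B, C$ are self-adjoint, $\tilde H$ is again a Hamiltonian operator matrix. The hypothesis $\mathcal{D}(H) = \mathcal{D}(A) \times \mathcal{D}(A^*)$ forces $\mathcal{D}(A) \subset \mathcal{D}(C)$ and $\mathcal{D}(A^*) \subset \mathcal{D}(B)$, and a direct calculation, together with $J^2 = -I$, gives $J\tilde H J = H$ as operators. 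From the general inclusion $JH \subset (JH)^* = -H^*J$ one obtains $H \subset JH^*J$, while a duality computation using $B^* = B$, $C^* = C$ and the domain hypothesis gives $\tilde H \subset H^*$. Conjugating by $J$ and using $J^{-1} = -J$ then shows
$$H \text{ is symplectic self-adjoint} \iff H = JH^*J \iff H^* = \tilde H.$$

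\textbf{Frobenius--Schur step.} Fix $\lambda \in \rho(A)$, so $\bar\lambda \in \rho(A^*)$. Apply Lemma \ref{lem2.1}(2) to $H$ at $\lambda$ and to $\tilde H$ at $\bar\lambda$:
$$H - \lambda = L(\lambda)\,\mathrm{diag}(A-\lambda,\, S_2(\lambda))\, U(\lambda),$$
$$\tilde H - \bar\lambda = \tilde L(\bar\lambda)\,\mathrm{diag}(A^*-\bar\lambda,\, \tilde S_2(\bar\lambda))\, \tilde U(\bar\lambda),$$
with bounded and boundedly invertible outer factors, and with $S_2(\lambda) = -A^*-\lambda - C(A-\lambda)^{-1}B$, $\tilde S_2(\bar\lambda) = -A-\bar\lambda - B(A^*-\bar\lambda)^{-1}C$. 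A direct check, exploiting $B^* = B$ and $C^* = C$, gives $L(\lambda)^* = \tilde U(\bar\lambda)$ and $U(\lambda)^* = \tilde L(\bar\lambda)$. Taking the adjoint of the first factorization (unambiguous because $L, U$ are bounded invertible) yields
$$H^* - \bar\lambda = \tilde L(\bar\lambda)\,\mathrm{diag}(A^*-\bar\lambda,\, S_2(\lambda)^*)\,\tilde U(\bar\lambda).$$
Comparing with the factorization of $\tilde H - \bar\lambda$, which shares the same outer factors, the equality $H^* = \tilde H$ is equivalent to $S_2(\lambda)^* = \tilde S_2(\bar\lambda)$; multiplying through by $-1$ this is precisely statement (3). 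Applying Lemma \ref{lem2.1}(1) in parallel fashion (to $H$ at $-\bar\lambda$ and to $\tilde H$ at $-\lambda$), the first Schur complements give $S_1(-\bar\lambda) = A + \bar\lambda + B(A^*-\bar\lambda)^{-1}C$ and $\tilde S_1(-\lambda) = A^* + \lambda + C(A-\lambda)^{-1}B$, and the same adjoint argument produces statement (2) as the equivalent of $H^* = \tilde H$.

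\textbf{Main obstacle.} The most delicate point is the bookkeeping of domains when taking the adjoint of a product of unbounded operators and matching it against a factorization of a different unbounded operator. This is rescued by the fact that the $L$ and $U$ factors in Lemma \ref{lem2.1} are bounded with bounded inverses, so that $(LDU)^* = U^*D^*L^*$ holds with exactly matching domains and no closure issues appear; the operator equality $H^* = \tilde H$ can then be read off directly as equality of the scalar Schur complements. The ``for some iff for all'' clause is automatic, since the central characterization (1) is $\lambda$-free.
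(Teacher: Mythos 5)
Your argument is, at its core, the same as the paper's: a Frobenius--Schur factorization from Lemma \ref{lem2.1}, passage to adjoints through the outer triangular factors, and comparison of Schur complements. The only structural difference is where you conjugate by $J$: the paper factorizes $JH-\lambda J$ and $(JH)^{*}-\lambda J$ directly, whereas you first trade $JH=(JH)^{*}$ for $H^{*}=\tilde H$ with $\tilde H=JHJ$ and then compare factorizations of $H-\lambda$ and $\tilde H-\overline{\lambda}$; this reduction is correct (and the domain identification $\mathcal{D}(\tilde H)=\mathcal{D}(A^{*})\times\mathcal{D}(A)$ does follow from the hypothesis), but it buys nothing essentially new.

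One point in your ``main obstacle'' paragraph needs repair. As produced by Lemma \ref{lem2.1}, the upper triangular factors are \emph{not} bounded with bounded inverses on $X\times X$: $U(\lambda)$ contains $(A-\lambda)^{-1}B$ with domain $\mathcal{D}(B)$, and $\tilde U(\overline{\lambda})$ contains $(A^{*}-\overline{\lambda})^{-1}C$ with domain $\mathcal{D}(C)$, so they are merely densely defined, and consequently $L(\lambda)^{*}$ equals the \emph{closure} of $\tilde U(\overline{\lambda})$ rather than $\tilde U(\overline{\lambda})$ itself. Hence the assertion that $(LDU)^{*}=U^{*}D^{*}L^{*}$ holds ``with exactly matching domains and no closure issues'' is not true as written. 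The fix is exactly the paper's step leading to \eqref{eq3.2}: replace the offending entry by its closure, observing that this does not change the product because the domain of the middle diagonal factor already forces the relevant component into $\mathcal{D}(A^{*})\subset\mathcal{D}(B)$ (respectively $\mathcal{D}(A)\subset\mathcal{D}(C)$); only after this replacement are the outer factors everywhere defined and boundedly invertible, so that Lemmas \ref{lemA2} and \ref{lemA3} give the adjoint of the product and the middle factors can be compared. With that emendation your proof is complete and matches the paper's.
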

\begin{proof}
Taking $\lambda\in\rho(A)$.
By applying Lemma \ref{lem2.1} to $(H-\lambda)$ and then a little calculation we see that
\begin{align}\label{eq3.1}
JH-\lambda J=&\left(
               \begin{array}{cc}
                 I & -C(A-\lambda)^{-1} \\
                 0 & I \\
               \end{array}
             \right)\left(
                      \begin{array}{cc}
                        -S_2(\lambda) & 0 \\
                        0 & -A+\lambda \\
                      \end{array}
                    \right)\\
\nonumber                    &\left(
                             \begin{array}{cc}
                               0 & -I \\
                               I & (A-\lambda)^{-1}B \\
                             \end{array}
                           \right),
\end{align}
where $S_2(\lambda):=-A^*-\lambda-C(A-\lambda)^{-1}B$ is the second Schur complement of $H$.
Note that $(A-\lambda)^{-1}B$ is bounded on its domain $\mathcal{D}(B)$
since $\mathcal{D}(B)\supset\mathcal{D}(A^*)$ (\cite[Proposition 3.1]{ALMS}).
Then in \eqref{eq3.1} we can replace
$(A-\lambda)^{-1}B=\overline{(A-\lambda)^{-1}B}\upharpoonright_{\mathcal{D}(B)}$ by
$\overline{(A-\lambda)^{-1}B}=(B(A^*-\overline{\lambda})^{-1})^*$ since the domain of the middle factor is equal to $\mathcal{D}(A^*)\times\mathcal{D}(A)$.
Thus
\begin{align}\label{eq3.2}
JH-\lambda J=&\left(
               \begin{array}{cc}
                 I & -C(A-\lambda)^{-1} \\
                 0 & I \\
               \end{array}
             \right)\left(
                      \begin{array}{cc}
                        -S_2(\lambda) & 0 \\
                        0 & -A+\lambda \\
                      \end{array}
                    \right)\\
\nonumber                    &\left(
                             \begin{array}{cc}
                               0 & -I \\
                               I & (B(A^*-\overline{\lambda})^{-1})^* \\
                             \end{array}
                           \right).
\end{align}
Similar to the proof of \eqref{eq3.2}, we have
\begin{align}\label{eq3.3}
JH+\overline{\lambda}J=&\left(
                          \begin{array}{cc}
                            0 & I \\
                            -I & B(A^*-\overline{\lambda})^{-1} \\
                          \end{array}
                        \right)\left(
                                 \begin{array}{cc}
                                   S_1(-\overline{\lambda}) & 0 \\
                                   0 & -A^*+\overline{\lambda} \\
                                 \end{array}
                               \right)\\
\nonumber         &\left(
                     \begin{array}{cc}
                       I & 0 \\
                       (-C(A-\lambda)^{-1})^* & I \\
                     \end{array}
                   \right),
\end{align}
where $S_1(-\overline{\lambda}):=A+\overline{\lambda}+B(A^*-\overline{\lambda})^{-1}C$ is the first Schur complement of $H$.
In the factorization \eqref{eq3.3}, the first and last factor are bounded and boundedly invertible,
and therefore by Lemma \ref{lemA2} and Lemma \ref{lemA3},
\begin{align}\label{eq3.4}
(JH)^*-\lambda J=&\left(
    \begin{array}{cc}
      I & -C(A-\lambda)^{-1} \\
      0 & I \\
    \end{array}
  \right)\left(
           \begin{array}{cc}
             (S_1(-\overline{\lambda}))^* & 0 \\
             0 & -A+\lambda \\
           \end{array}
         \right)\\
\nonumber  &\left(
                    \begin{array}{cc}
                      0 & -I \\
                      I & (B(A^*-\overline{\lambda})^{-1})^* \\
                    \end{array}
                  \right).
\end{align}
Note that in the factorization \eqref{eq3.2} or \eqref{eq3.4}, the first and last factor are bounded and boundedly invertible,
so that $JH=(JH)^*$ if and only if $-S_2(\lambda)=S_1(-\overline{\lambda})^*$.
Similarly, $JH=(JH)^*$ if and only if $S_1(-\overline{\lambda})=(-S_2(\lambda))^*$.
\end{proof}

\begin{corollary}\label{cor3.1}
Suppose that $\mathcal{D}(H)=\mathcal{D}(A)\times\mathcal{D}(A^*)$, and that $\rho(A)\neq\emptyset$.
Then $JH=(JH)^*$ if one of the following holds:
\begin{enumerate}
\item $C$ is $A$-bounded with relative bound $0$,
\item $B$ is $A^*$-bounded with relative bound $0$.
\end{enumerate}
\end{corollary}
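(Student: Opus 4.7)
The plan is to invoke Theorem \ref{th3.1} and to establish, for a conveniently chosen $\lambda\in\rho(A)$, the Schur-complement adjoint identity from part (3):
\[
A+\bar\lambda+B(A^*-\bar\lambda)^{-1}C=\bigl(A^*+\lambda+C(A-\lambda)^{-1}B\bigr)^*.
\]
I will work out case (1) in detail; case (2) is then obtained by a completely symmetric argument interchanging $A\leftrightarrow A^*$ and $B\leftrightarrow C$ and using part (2) of Theorem \ref{th3.1} in place of part (3). The inclusion ``$\subset$'' in the displayed identity is automatic from $JH\subset(JH)^*$ (read off from the factorizations \eqref{eq3.2} and \eqref{eq3.4}), so the task reduces to the reverse inclusion, i.e.\ showing that the right-hand side has no larger domain than $\mathcal{D}(A^*)$.

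First I would show that the off-diagonal term $V:=B(A^*-\bar\lambda)^{-1}C$, viewed as an operator on $\mathcal{D}(A)\subset\mathcal{D}(C)$, is $A$-bounded with relative bound $0$. The ingredients are (i) the bound $\|Cu\|\le\epsilon\|Au\|+b_\epsilon\|u\|$ available for every $\epsilon>0$ from hypothesis (1), (ii) the boundedness of $(A^*-\bar\lambda)^{-1}\colon X\to\mathcal{D}(A^*)$, and (iii) the $A^*$-boundedness of $B$ that comes for free from $\mathcal{D}(B)\supset\mathcal{D}(A^*)$ via the closed graph theorem. Chaining these produces an estimate of the form $\|Vu\|\le C_1(\lambda)(\epsilon\|Au\|+b_\epsilon\|u\|)$, from which the relative-bound-$0$ claim follows.

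With this in hand, Kato's perturbation theorem gives that $S_1(-\bar\lambda)=(A+\bar\lambda)+V$ is closed on $\mathcal{D}(A)$. To identify its Hilbert-space adjoint, I would first check that the formal adjoint $V^*\supset C(A-\lambda)^{-1}B$ on $\mathcal{D}(A^*)\subset\mathcal{D}(B)$ is obtained from $V$ via the self-adjointness of $B,C$ and the identity $((A-\lambda)^{-1})^*=(A^*-\bar\lambda)^{-1}$; a parallel estimate bounds its $A^*$-relative bound by a constant multiple of $\|C(A-\lambda)^{-1}\|$ times the fixed $A^*$-bound of $B$. A suitable choice of $\lambda\in\rho(A)$ then drives this relative bound strictly below $1$, and the adjoint version of Kato's theorem yields
\[
\bigl(S_1(-\bar\lambda)\bigr)^*=(A^*+\lambda)+C(A-\lambda)^{-1}B=-S_2(\lambda),
\]
which is precisely the identity required by Theorem \ref{th3.1}.

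The step I expect to be the main obstacle is producing a $\lambda\in\rho(A)$ at which $\|C(A-\lambda)^{-1}\|$ is genuinely small. The natural bound $\|C(A-\lambda)^{-1}\|\le\epsilon\bigl(1+|\lambda|\|(A-\lambda)^{-1}\|\bigr)+b_\epsilon\|(A-\lambda)^{-1}\|$ does not tend to $0$ merely by letting $\epsilon\to 0$ for fixed $\lambda$; I will need to select $\lambda$ far enough from $\sigma(A)$ so that $\|(A-\lambda)^{-1}\|$ becomes small, and then optimize $\epsilon$ accordingly. This is exactly the maneuver that the hypothesis ``relative bound $0$'' (rather than merely $<1$ as in Theorem \ref{th1.1}) is designed to accommodate, and it is what will let Corollary \ref{cor3.1} drop any a priori assumption on $B$ beyond what is already forced by $\mathcal{D}(H)=\mathcal{D}(A)\times\mathcal{D}(A^*)$.
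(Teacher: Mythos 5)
Your overall strategy (reduce to Theorem \ref{th3.1} and verify the Schur-complement adjoint identity via Lemma \ref{lemA1}) is the paper's, and your first step, showing $V=B(A^*-\overline{\lambda})^{-1}C$ is $A$-bounded with relative bound $0$, essentially matches the paper's. The gap is exactly where you flagged it, and it is not a technical obstacle you can work around: under the sole hypothesis $\rho(A)\neq\emptyset$ you have no license to ``select $\lambda$ far enough from $\sigma(A)$ so that $\|(A-\lambda)^{-1}\|$ becomes small''. The resolvent set may be a bounded set, and even when it is unbounded the resolvent norm of a non-normal $A$ need not decay along it; moreover your adjoint-side bound is $\|C(A-\lambda)^{-1}\|$ times the \emph{fixed} $A^*$-bound $a$ of $B$ (the constant handed to you by the closed graph theorem), and that $a$ can be arbitrarily large, so even a moderately small resolvent norm would not push the product below $1$. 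As written, your Step~2 does not close.

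The repair is a reordering of the estimate that makes the size of $\|(A-\lambda)^{-1}\|$ irrelevant. To bound $\|C(A-\lambda)^{-1}Bx\|$ for $x\in\mathcal{D}(A^*)$, apply the relative bound of $C$ to the vector $y=(A-\lambda)^{-1}Bx\in\mathcal{D}(A)$, giving $\|Cy\|\le\varepsilon\|(A-\lambda)y\|+b(\varepsilon)\|y\|=\varepsilon\|Bx\|+b(\varepsilon)\|(A-\lambda)^{-1}Bx\|$. Now the small parameter $\varepsilon$ multiplies $\|Bx\|\le a\|A^*x\|+b\|x\|$ with $a,b$ fixed, so the coefficient of $\|A^*x\|$ is $\varepsilon a$, which tends to $0$; the large constant $b(\varepsilon)$ multiplies only $\|(A-\lambda)^{-1}Bx\|\le M\|x\|$ (the operator $(A-\lambda)^{-1}B$ is bounded on $\mathcal{D}(B)$ because $\mathcal{D}(B)\supset\mathcal{D}(A^*)$) and therefore lands entirely on the $\|x\|$ term. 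Hence $C(A-\lambda)^{-1}B$ is $A^*$-bounded with relative bound $0$ for \emph{any} fixed $\lambda\in\rho(A)$, which together with your Step~1 is exactly what Lemma \ref{lemA1} requires; this is how the paper's proof proceeds.
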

\begin{proof}
We prove the claim in case (1); the proof in case (2) is analogous.
Taking $\lambda\in\rho(A)$. It is enough to prove
$$(A+\overline{\lambda}+B(A^*-\overline{\lambda})^{-1}C)^*=A^*+\lambda+C(A-\lambda)^{-1}B.$$
First we note that $B(A^*-\overline{\lambda})^{-1}C$ is $(A+\overline{\lambda})$-bounded with relative bound $0$ since $B(A^*-\overline{\lambda})^{-1}$ is bounded (note that it is a closed everywhere defined operator) and $C$ is $A$-bounded with relative bound $0$.
Next we prove $(B(A^*-\overline{\lambda})^{-1}C)^*$ is $(A+\overline{\lambda})^*$-bounded with relative bound $0$.
To do this, we claim that $C(A-\lambda)^{-1}B$ is $A^*$-bounded with relative bound $0$.
In fact, by the assumptions,
for every $\varepsilon>0$ there is a real number $b(\varepsilon)$ such that for all $x\in\mathcal{D}(A)$,
$$\|Cx\|\leq \varepsilon\|(A-\lambda)x\|+b(\varepsilon)\|x\|,$$
and for some $a$ and $b$ in $\mathbb R$ and all $x\in\mathcal{D}(A^*)$,
$$\|Bx\|\leq a\|A^*x\|+b\|x\|,$$
so for all $x\in\mathcal{D}(A^*)$,
\begin{align*}
\|C(A-\lambda)^{-1}Bx\|&\leq \varepsilon\|Bx\|+b(\varepsilon)\|(A-\lambda)^{-1}Bx\|  \\
&\leq\varepsilon a\|A^*x\|+b(\varepsilon,\lambda)\|x\|
\end{align*}
since $(A-\lambda)^{-1}B$ is bounded on $\mathcal{D}(B)$.
It follows that $C(A-\lambda)^{-1}B$ is $A^*$-bounded with relative bound $0$.
Noting that
\begin{align*}
(B(A^*-\overline{\lambda})^{-1}C)^*=C^*(B(A^*-\overline{\lambda})^{-1})^*=C\overline{(A-\lambda)^{-1}B}
\end{align*}
and that $\mathcal{D}(B)\supset\mathcal{D}(A^*)$,
so $(B(A^*-\overline{\lambda})^{-1}C)^*$ is $(A+\overline{\lambda})^*$-bounded with relative bound $0$.
Hence by Lemma \ref{lemA1},
\begin{align*}
(A+\overline{\lambda}+B(A^*-\overline{\lambda})^{-1}C)^*=&(A+\overline{\lambda})^*+(B(A^*-\overline{\lambda})^{-1}C)^*\\
                                                        =&A^*+\lambda+C\overline{(A-\lambda)^{-1}B}\\
                                                        =&A^*+\lambda+C(A-\lambda)^{-1}B.
\end{align*}

\end{proof}

For the definition and properties of a maximal accretive operator in the following corollary, see \cite[Section IV.4]{Nagy}.
\begin{corollary}\label{cor3.2}
Suppose that $\mathcal{D}(H)=\mathcal{D}(A)\times\mathcal{D}(A^*)$, and that $A$ or $-A$ is maximal accretive.
Then $JH=(JH)^*$ if one of the following holds:
\begin{enumerate}
\item $C$ is $A$-bounded with relative bound $< 1$ and $B$ is $A^*$-bounded with relative bound $\leq 1$,
\item $C$ is $A$-bounded with relative bound $\leq 1$ and $B$ is $A^*$-bounded with relative bound $< 1$.
\end{enumerate}
\end{corollary}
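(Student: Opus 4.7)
The plan is to mimic the proof of Corollary \ref{cor3.1}, but with relative bound strictly less than $1$ in place of $0$, using the resolvent estimates supplied by maximal accretivity to beat the naive factor of $2$ that would otherwise wreck the argument. I would assume without loss of generality that $A$ is maximal accretive (if instead $-A$ is, replace $\lambda=-\mu$ by $\lambda=\mu$ throughout). By Theorem \ref{th3.1}(2), it suffices to exhibit some $\lambda\in\rho(A)$ for which
\[
\bigl(A+\overline{\lambda}+B(A^{*}-\overline{\lambda})^{-1}C\bigr)^{*}=A^{*}+\lambda+C(A-\lambda)^{-1}B,
\]
and I would do this by taking $\lambda=-\mu$ with $\mu>0$ sufficiently large, verifying the hypotheses of Lemma \ref{lemA1}, and identifying closures exactly as at the end of Corollary \ref{cor3.1}.

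The key analytic input is the sharp estimate $\|A(A+\mu)^{-1}\|\le 1$ for every $\mu>0$, which improves on the trivial bound $\le 2$ obtained from $A(A+\mu)^{-1}=I-\mu(A+\mu)^{-1}$. Setting $z=(A+\mu)^{-1}y$ gives $Az=y-\mu z$, and accretivity yields $\mathrm{Re}\langle y,z\rangle=\mathrm{Re}\langle Az,z\rangle+\mu\|z\|^{2}\ge\mu\|z\|^{2}$; expanding $\|Az\|^{2}=\|y\|^{2}-2\mu\,\mathrm{Re}\langle y,z\rangle+\mu^{2}\|z\|^{2}$ and plugging this in gives $\|Az\|^{2}\le\|y\|^{2}-\mu^{2}\|z\|^{2}\le\|y\|^{2}$. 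Since maximal accretivity on a Hilbert space is preserved under adjoints, the same bound holds with $A^{*}$ in place of $A$, together with $\|(A^{*}+\mu)^{-1}\|\le 1/\mu$.

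With these sharp bounds in hand, fix admissible constants so that $\|Cy\|\le a\|Ay\|+b\|y\|$ on $\mathcal{D}(A)$ and $\|Bx\|\le a'\|A^{*}x\|+b'\|x\|$ on $\mathcal{D}(A^{*})$. For $y\in\mathcal{D}(A)$ one then gets
\[
\|B(A^{*}+\mu)^{-1}Cy\|\le(a'+b'/\mu)\|Cy\|\le(a'+b'/\mu)\bigl(a\|Ay\|+b\|y\|\bigr),
\]
so $B(A^{*}+\mu)^{-1}C$ is $(A+\overline{\lambda})$-bounded with relative bound tending to $aa'$ as $\mu\to\infty$; a symmetric computation yields the same asymptotic relative bound for $C(A+\mu)^{-1}B$ with respect to $A^{*}+\lambda$. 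Denoting the relative bounds of $C$ and $B$ by $r_{C}$ and $r_{B}$, case (1) has $r_{C}<1$ and $r_{B}\le 1$, and I would pick $a\in(r_{C},1)$ and then $a'\in(r_{B},1/a)$ — the latter interval being non-empty because $a<1$ forces $1/a>1\ge r_{B}$ — so that $aa'<1$. For $\mu$ large, both relative bounds above are then strictly less than $1$. Case (2) is entirely symmetric. Lemma \ref{lemA1} now delivers the desired equality of adjoints, completing the proof.

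The main obstacle is really the sharp accretive bound $\|A(A+\mu)^{-1}\|\le 1$: without this improvement the asymptotic relative bound in the computation above would be $2aa'$ rather than $aa'$, forcing $aa'<1/2$ and covering only a strictly smaller region than the corollary claims. The only remaining delicate point is the book-keeping that lets one realise admissible $a,a'$ just above the respective relative bounds while keeping $aa'$ strictly below $1$; this is handled cleanly by the inequality $a<1\le 1/a$.
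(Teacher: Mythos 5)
Your proof is correct and follows essentially the same route as the paper's: reduce via Theorem \ref{th3.1} to the adjoint-sum identity at $\lambda=-\mu$ with $\mu>0$ large, use the accretivity estimate $\|(A^*+\mu)x\|^2\ge\|A^*x\|^2+\mu^2\|x\|^2$ (equivalently your sharp bound $\|A(A+\mu)^{-1}\|\le 1$) to make the Schur-complement perturbations relatively bounded with bound $<1$ on both sides, and conclude with Lemma \ref{lemA1}. The only difference is cosmetic bookkeeping: your symmetric product bound $aa'<1$ obtained from $a\in(r_C,1)$, $a'\in(r_B,1/a)$ replaces the paper's slightly asymmetric two-step estimates $\|B(A^*+\lambda)^{-1}\|<1$ and $\|C(A+\lambda)^{-1}\|<1+\varepsilon$.
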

\begin{proof}
Since $-H$ is also a Hamiltonian operator, it is enough to prove the case that $A$ is maximal accretive.
We prove the claim in case (2); the proof in case (1) is analogous.
Note that the operators $A$ and $A^*$ can be maximal accretive only simultaneously.
To prove $JH=(JH)^*$, we have to show that for some $\lambda>0$,
\begin{align}\label{eq3.5}
(A-\lambda+B(A^*+\lambda)^{-1}C)^*=A^*-\lambda+C(A+\lambda)^{-1}B.
\end{align}

\noindent \emph{Step 1}. We start from
the claim that for $\lambda>0$ large enough, $B(A^*+\lambda)^{-1}C$ is $(A-\lambda)$-bounded with relative bound $<1$.
Since $C$ is $A$-bounded with relative bound $\leq 1$, it is enough to prove
\begin{align}\label{eq3.6}
\|B(A^*+\lambda)^{-1}\|<1
\end{align}
for $\lambda>0$ large enough.
We observe that for $x\in\mathcal{D}(A^*)$,
\begin{align}\label{eq3.7}
\|(A^*+\lambda)x\|^2&=\|A^*x\|^2+2\lambda{\rm Re}(A^*x,x)+\lambda^2\|x\|^2\\
\nonumber           &\geq\|A^*x\|^2+\lambda^2\|x\|^2,
\end{align}
where the last inequality follows from the fact that $A^*$ is maximal accretive.
By the assumption that $B$ is $A^*$-bounded with relative bound $< 1$,
there are real numbers $a<1$ and $b$ such that for $x\in\mathcal{D}(A^*)$,
\begin{align}\label{eq3.8}
\|Bx\|\leq a\|A^*x\|+b\|x\|,
\end{align}
so that for $x\in\mathcal{D}(A^*)$, we have, using \eqref{eq3.7} twice and then \eqref{eq3.8},
\begin{align*}
\|Bx\|\leq (a+\frac{b}{\lambda})\|(A^*+\lambda)x\|.
\end{align*}
It is enough to choose $\lambda>0$ large enough such that $a+\frac{b}{\lambda}<1$.

\noindent \emph{Step 2}. In this step, we show that for $\lambda>0$ large enough, $(B(A^*+\lambda)^{-1}C)^*$ is $(A-\lambda)^*$-bounded with relative bound $<1$.
Noting that
\begin{align*}
(B(A^*+\lambda)^{-1}C)^*=C^*(B(A^*+\lambda)^{-1})^*=C\overline{(A+\lambda)^{-1}B}
\end{align*}
and that $\mathcal{D}(B)\supset\mathcal{D}(A^*)$, it is enough to prove for $\lambda>0$ large enough, $C(A+\lambda)^{-1}B$ is $(A^*-\lambda)$-bounded with relative bound $<1$.
Since $C$ is $A$-bounded with relative bound $\leq 1$ and the operator $A$ is maximal accretive,
we have, with arguments similar to the ones used in the proof of \eqref{eq3.6},
for every $\varepsilon>0$ there is a $N(\varepsilon)>0$
such that for $\lambda>N(\varepsilon)$,
\begin{align}\label{eq3.9}
\|C(A+\lambda)^{-1}\|< 1+\varepsilon.
\end{align}
Further, by \eqref{eq3.8}, for $x\in\mathcal{D}(A^*)$,
\begin{align}\label{eq3.10}
\|Bx\|\leq a\|(A^*-\lambda)x\|+(b+a\lambda)\|x\|.
\end{align}
It follows from \eqref{eq3.9} and \eqref{eq3.10} that for $\lambda>N(\varepsilon)$ and $x\in\mathcal{D}(A^*)$,
\begin{align*}
\|C(A+\lambda)^{-1}Bx\|\leq(1+\varepsilon)a\|(A^*-\lambda)x\|+(1+\varepsilon)(b+a\lambda)\|x\|.
\end{align*}
It is enough to choose $\varepsilon>0$ small enough such that $(1+\varepsilon)a<1$.

\noindent \emph{Step 3}. Now \eqref{eq3.5} follows from \emph{Step 1} and \emph{Step 2}
by applying Lemma \ref{lemA1}.
\end{proof}

\begin{corollary}\label{cor3.3}
Suppose that $\mathcal{D}(H)=\mathcal{D}(A)\times\mathcal{D}(A^*)$, and that $A$ is self-adjoint.
Then $JH=(JH)^*$ if one of the following holds:
\begin{enumerate}
\item $C$ is $A$-bounded with relative bound $< 1$ and $B$ is $A^*$-bounded with relative bound $\leq 1$,
\item $C$ is $A$-bounded with relative bound $\leq 1$ and $B$ is $A^*$-bounded with relative bound $< 1$.
\end{enumerate}
\end{corollary}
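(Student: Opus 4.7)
The plan is to adapt the three-step argument in the proof of Corollary \ref{cor3.2}, replacing the positive real spectral parameter used there by a purely imaginary one. Although a self-adjoint $A$ is in general neither accretive nor has accretive negative, self-adjointness supplies an exact analogue of the crucial estimate \eqref{eq3.7}: for every real $\mu\neq 0$ one has $i\mu\in\rho(A)$, and by the spectral calculus
\[
\|(A\pm i\mu)x\|^{2}=\|Ax\|^{2}+\mu^{2}\|x\|^{2},\qquad x\in\mathcal{D}(A).
\]

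Fixing $\lambda=i\mu$ with $\mu>0$ and using $A^{*}=A$, Theorem \ref{th3.1}(2) reduces the symplectic self-adjointness of $H$ to the operator identity
\[
\bigl(A-i\mu+B(A+i\mu)^{-1}C\bigr)^{*}=A+i\mu+C(A-i\mu)^{-1}B.
\]
I would obtain this via Lemma \ref{lemA1} by verifying, for $\mu$ large enough, that $B(A+i\mu)^{-1}C$ is $(A-i\mu)$-bounded with relative bound strictly less than $1$, and that its adjoint $C\,\overline{(A-i\mu)^{-1}B}$ is $(A+i\mu)$-bounded with relative bound strictly less than $1$.

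The required estimates are a direct consequence of the displayed identity above: it gives $\|A(A+i\mu)^{-1}\|\le 1$ and $\|(A+i\mu)^{-1}\|\le 1/\mu$, so if $\|Bx\|\le a\|A^{*}x\|+b\|x\|$ and $\|Cx\|\le c\|Ax\|+d\|x\|$ are the standing relative-bound inequalities, then $\|B(A+i\mu)^{-1}\|\le a+b/\mu$ and $\|C(A-i\mu)^{-1}\|\le c+d/\mu$. Combining these with the trivial bound $\|Ax\|\le\|(A\mp i\mu)x\|$ yields relative bounds $c(a+b/\mu)$ and $a(c+d/\mu)$ for the two operators in question with respect to $A-i\mu$ and $A+i\mu$, respectively.

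The last step is the relative-bound book-keeping, which I expect to be the only subtle point. In case (1) one has $c<1$ and $a\le 1$, so $c(a+b/\mu)<1$ and $a(c+d/\mu)\le c+d/\mu<1$ for all sufficiently large $\mu$; in case (2) one has $a<1$ and $c\le 1$, and both quantities are again $<1$ for large $\mu$ because $a<1$. The delicate feature is that at least one of $a,c$ must be \emph{strictly} less than $1$ in order to force each product below $1$ as $\mu\to\infty$, which is exactly what each hypothesis guarantees. Applying Lemma \ref{lemA1} then produces the displayed identity, and Theorem \ref{th3.1} concludes that $JH=(JH)^{*}$.
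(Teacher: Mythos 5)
Your proposal is correct and follows essentially the same route as the paper: the paper's proof of Corollary \ref{cor3.3} simply says ``similar to the proof of Corollary \ref{cor3.2}'' and records the identity $(A-i\lambda+B(A^*+i\lambda)^{-1}C)^*=A^*+i\lambda+C(A-i\lambda)^{-1}B$, which is exactly what you establish by replacing the real shift $\lambda>0$ with the imaginary shift $i\mu$ and using $\|(A\pm i\mu)x\|^2=\|Ax\|^2+\mu^2\|x\|^2$ in place of the accretivity estimate \eqref{eq3.7}. Your relative-bound bookkeeping and the appeal to Lemma \ref{lemA1} and Theorem \ref{th3.1} match the intended argument, just spelled out in more detail than the paper provides.
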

\begin{proof}
Similar to the proof of Corollary \ref{cor3.2}, we have
\begin{align*}
(A-i\lambda+B(A^*+i\lambda)^{-1}C)^*=A^*+i\lambda+C(A-i\lambda)^{-1}B
\end{align*}
for some $\lambda>0$.
\end{proof}

Finally, we consider the case $\mathcal{D}(H)=\mathcal{D}(C)\times\mathcal{D}(B)$.
\begin{theorem}\label{th3.2}
Suppose $\mathcal{D}(H)=\mathcal{D}(C)\times\mathcal{D}(B)$.
Then the following statements are equivalent:
\begin{enumerate}
\item $H$ is symplectic self-adjoint,
\item $C+\lambda+A^*(B-\lambda)^{-1}A=(C+\overline{\lambda}+A^*(B-\overline{\lambda})^{-1}A)^*$
for some (and hence for all) $\lambda\in\rho(B)$,
\item $B+\lambda+A(C-\lambda)^{-1}A^*=(B+\overline{\lambda}+A(C-\overline{\lambda})^{-1}A^*)^*$
for some (and hence for all) $\lambda\in\rho(C)$.
\end{enumerate}
\end{theorem}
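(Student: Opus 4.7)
I would mirror the proof of Theorem \ref{th3.1}, exchanging the roles of the diagonal and off-diagonal entries of $H$. The hypothesis $\mathcal{D}(H) = \mathcal{D}(C) \times \mathcal{D}(B)$ forces $\mathcal{D}(C) \subset \mathcal{D}(A)$ and $\mathcal{D}(B) \subset \mathcal{D}(A^*)$, so the self-adjoint operators $B$ and $C$ now play the structural role previously played by $A$: their resolvents are bounded and everywhere-defined, they compose well with $A$ and $A^*$ by the closed graph theorem, and they behave well under adjunction thanks to $B^* = B$, $C^* = C$.

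For the equivalence $(1) \Leftrightarrow (2)$, I would fix $\lambda \in \rho(B)$ and apply Lemma \ref{lem2.1}(1) directly to the block operator matrix $JH$ (with diagonal $(C, -B)$ and off-diagonal $(-A^*, -A)$) at the scalar parameter $\mu = -\lambda$; note that $\mu \in \rho(-B)$, and the required domain inclusion $\mathcal{D}(-B) \subset \mathcal{D}(-A^*)$ holds by hypothesis. A direct computation identifies the first Schur complement of $JH - \mu$ with $T_1(\lambda) := C + \lambda + A^*(B - \lambda)^{-1} A$. The left triangular factor carries $-A^*(\lambda - B)^{-1}$ above the diagonal and is bounded by the closed graph theorem; the right triangular factor carries $-(\lambda - B)^{-1} A$ on $\mathcal{D}(A)$, which is unbounded in general. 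As in the passage from \eqref{eq3.1} to \eqref{eq3.2}, this entry can be replaced by its closure $-(A^*(\overline{\lambda} - B)^{-1})^*$, which is bounded and everywhere-defined; since the middle factor is defined only on $\mathcal{D}(C) \times \mathcal{D}(B) = \mathcal{D}(JH + \lambda)$, the replacement leaves the operator identity intact.

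Repeating the construction at $\overline{\lambda}$ in place of $\lambda$ yields a factorization of $JH + \overline{\lambda}$ with $T_1(\overline{\lambda})$ in the middle. Taking adjoints (using Lemmas \ref{lemA2} and \ref{lemA3} to push the adjoint through the bounded outer factors), and exploiting $B^* = B$, produces a factorization of $(JH)^* + \lambda = (JH + \overline{\lambda})^*$ whose outer factors coincide exactly with those of the $JH + \lambda$ factorization and whose middle factor carries $T_1(\overline{\lambda})^*$. Since the outer factors are bounded and boundedly invertible, the identity $JH + \lambda = (JH)^* + \lambda$, equivalently $JH = (JH)^*$, collapses to the operator equality $T_1(\lambda) = T_1(\overline{\lambda})^*$, which is statement (2). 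The equivalence $(1) \Leftrightarrow (3)$ is entirely parallel, using Lemma \ref{lem2.1}(2) at $\lambda \in \rho(C)$ and producing (up to sign) the second Schur complement $T_2(\lambda) = B + \lambda + A(C - \lambda)^{-1} A^*$.

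The conceptual structure is routine once Theorem \ref{th3.1} is in hand; the real obstacle is domain bookkeeping. I must verify that replacing $(\lambda - B)^{-1} A$ by its closure does not disturb the operator equality on $\mathcal{D}(JH + \lambda)$ (it does not, because the middle factor's domain constrains outputs to the subspace on which the original and closed versions of the $(2,1)$-entry agree), and that the adjoint of the factorization at $\overline{\lambda}$ cleanly produces $T_1(\overline{\lambda})^*$ rather than a proper extension or restriction of it. These are precisely the technicalities addressed by Lemmas \ref{lemA2} and \ref{lemA3} in the proof of Theorem \ref{th3.1}, and I expect them to transfer verbatim.
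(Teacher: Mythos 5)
Your proposal is correct and follows essentially the same route as the paper: apply the Frobenius--Schur factorization (Lemma \ref{lem2.1}) to $JH$, whose diagonal entries are now $C$ and $-B$, replace the remaining non-everywhere-defined triangular entry by its bounded closure, and push adjoints through the bounded, boundedly invertible outer factors via Lemmas \ref{lemA2} and \ref{lemA3}, so that symplectic self-adjointness collapses to the equality of a Schur complement with the adjoint of its conjugate counterpart. One small wording slip: the entry $(\lambda-B)^{-1}A$ restricted to $\mathcal{D}(A)$ is not ``unbounded in general'' but bounded there (it is a restriction of the adjoint of the bounded, everywhere-defined operator $A^{*}(\overline{\lambda}-B)^{-1}$), which is precisely why the replacement by its closure is legitimate, as your own next clause correctly asserts.
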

\begin{proof}
Let $\lambda\in\rho(C)$.
By applying Lemma \ref{lem2.1} to $(JH-\lambda)$ and $(JH-\overline{\lambda})$, respectively,
we have, with arguments similar to the ones used in the proof of Theorem \ref{th3.1},
\begin{align*}
JH-\lambda=&\left(
              \begin{array}{cc}
                I & 0 \\
                -A(C-\lambda)^{-1} & I \\
              \end{array}
            \right)\left(
                     \begin{array}{cc}
                       C-\lambda & 0 \\
                       0 & S_2(\lambda) \\
                     \end{array}
                   \right)\\
            &\left(
               \begin{array}{cc}
                 I & (-A(C-\overline{\lambda})^{-1})^* \\
                 0 & I \\
               \end{array}
             \right),\\
(JH)^*-\lambda=&\left(
                   \begin{array}{cc}
                     I & 0 \\
                     -A(C-\lambda)^{-1} & I \\
                   \end{array}
                 \right)\left(
                          \begin{array}{cc}
                            C-\lambda & 0 \\
                            0 & S_2(\overline{\lambda})^* \\
                          \end{array}
                        \right)\\
                 &\left(
                          \begin{array}{cc}
                            I & (-A(C-\overline{\lambda})^{-1})^* \\
                            0 & I \\
                          \end{array}
                        \right),
\end{align*}
where $S_2(\lambda):=-B-\lambda-A(C-\lambda)^{-1}A^*$ is the second Schur complement of $JH$,
so that $JH=(JH)^*$ if and only $S_2(\lambda)=S_2(\overline{\lambda})^*$.
Similarly, $JH=(JH)^*$ if and only if $S_1(\lambda)=S_1(\overline{\lambda})^*$ for some (and hence for all) $\lambda\in\rho(B)$,
where $S_1(\lambda):=C+\lambda+A^*(B-\lambda)^{-1}A$ is the first Schur complement of $JH$.
\end{proof}

\begin{corollary}\label{cor3.4}
Suppose $\mathcal{D}(H)=\mathcal{D}(C)\times\mathcal{D}(B)$.
Then $JH=(JH)^*$ if one of the following holds:
\begin{enumerate}
\item $A$ is $C$-bounded with relative bound $< 1$ and $A^*$ is $B$-bounded with relative bound $\leq 1$,
\item $A$ is $C$-bounded with relative bound $\leq 1$ and $A^*$ is $B$-bounded with relative bound $< 1$.
\end{enumerate}
\end{corollary}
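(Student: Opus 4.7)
The plan is to reduce to the Schur-complement criterion in Theorem~\ref{th3.2} and then follow the perturbation strategy of Corollary~\ref{cor3.2}. Since $B$ and $C$ are self-adjoint by Definition~\ref{def2.2}, the resolvent set $\rho(C)$ contains $i\mathbb{R}\setminus\{0\}$, so by the equivalence of (1) and (3) in Theorem~\ref{th3.2} it suffices to find $\mu>0$ with
\begin{align*}
B+i\mu+A(C-i\mu)^{-1}A^* = \bigl(B-i\mu+A(C+i\mu)^{-1}A^*\bigr)^*.
\end{align*}
Since both hypotheses (1) and (2) can be treated by the same scheme, I will describe only case~(2): $A$ is $C$-bounded with relative bound $\leq 1$ and $A^*$ is $B$-bounded with relative bound $a<1$.

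The first step is to show that the perturbation $A(C-i\mu)^{-1}A^*$ is $B$-bounded with relative bound strictly less than $1$ for $\mu$ large enough. The ingredients are: self-adjointness of $C$ gives $\|C(C-i\mu)^{-1}\|\leq 1$ and $\|(C-i\mu)^{-1}\|\leq 1/\mu$; $C$-boundedness of $A$ gives $\|Ay\|\leq\|Cy\|+b\|y\|$ for $y\in\mathcal{D}(C)$; and $\|A^*x\|\leq a\|Bx\|+b'\|x\|$ for $x\in\mathcal{D}(B)$. Composing these, with $y=(C-i\mu)^{-1}A^*x\in\mathcal{D}(C)\subset\mathcal{D}(A)$, yields a bound on $\|A(C-i\mu)^{-1}A^*x\|$ with relative constant $(1+b/\mu)a$, which is less than $1$ for $\mu$ large enough.

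The second step establishes the same $B$-relative bound for the adjoint $(A(C-i\mu)^{-1}A^*)^*$. As in Corollary~\ref{cor3.2}, the operator $A(C-i\mu)^{-1}$ is bounded and everywhere defined, so one can compute
\begin{align*}
\bigl(A(C-i\mu)^{-1}A^*\bigr)^* = (A^*)^*\bigl(A(C-i\mu)^{-1}\bigr)^* = A\,\overline{(C+i\mu)^{-1}A^*},
\end{align*}
which on $\mathcal{D}(B)\subset\mathcal{D}(A^*)$ coincides with $A(C+i\mu)^{-1}A^*$; the Step~1 estimate then applies verbatim with $i\mu$ replaced by $-i\mu$.

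Finally, applying Lemma~\ref{lemA1} to the self-adjoint operator $B-i\mu$ yields
\begin{align*}
\bigl(B-i\mu+A(C+i\mu)^{-1}A^*\bigr)^* = B+i\mu+\bigl(A(C+i\mu)^{-1}A^*\bigr)^* = B+i\mu+A(C-i\mu)^{-1}A^*,
\end{align*}
which is condition~(3) of Theorem~\ref{th3.2}. The main technical subtlety will be Step~2, namely correctly identifying the adjoint of the product of three (partially unbounded) operators and verifying that the resulting expression agrees with $A(C+i\mu)^{-1}A^*$ on $\mathcal{D}(B)$.
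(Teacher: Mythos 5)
Your proposal is correct and takes essentially the same route as the paper: the paper's proof of Corollary~\ref{cor3.4} is a one-line reference to the argument of Corollary~\ref{cor3.2}, establishing exactly the identity $(B-i\lambda+A(C+i\lambda)^{-1}A^*)^* = B+i\lambda+A(C-i\lambda)^{-1}A^*$ via the Schur-complement criterion of Theorem~\ref{th3.2} and Lemma~\ref{lemA1}, with self-adjointness of $B$ and $C$ supplying the resolvent estimates $\|C(C-i\mu)^{-1}\|\leq 1$ and $\|(C-i\mu)^{-1}\|\leq 1/\mu$ on the imaginary axis. Your write-up merely fills in the details the paper leaves implicit; the only cosmetic imprecision is stating the relative-bound-$\leq 1$ hypothesis as an estimate with constant exactly $1$ rather than $1+\varepsilon$, which does not affect the conclusion.
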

\begin{proof}
Similar to the proof of Corollary \ref{cor3.2}, we have
\begin{align*}
(B-i\lambda+A(C+i\lambda)^{-1}A^*)^*=B+i\lambda+A(C-i\lambda)^{-1}A^*
\end{align*}
for some $\lambda>0$.
\end{proof}

The following example shows that the relative bounds in the assumptions of Corollary \ref{cor3.2}
(Corollary \ref{cor3.3}, Corollary \ref{cor3.4}, respectively) cannot be improved.
\begin{example}
Let $A$ be a nonnegative unbounded self-adjoint operator on $X$.
Note that $A$ is also maximal accretive.
Consider the Hamiltonian operator
$$H:=\left(
       \begin{array}{cc}
         A & A \\
         -A & -A \\
       \end{array}
     \right).$$
It is not difficult to see that $H$ is not closed since $A$ is unbounded,
so that $JH\neq(JH)^*$.
\end{example}

\section{an application to symplectic elasticity}\label{sec4}

Consider the rectangular thin plate bending problem with two opposite edges simply supported.
The basic governing equation in terms of displacement is
\begin{align}\label{eq4.1}
D(\frac{\partial^2}{\partial x^2}+\frac{\partial^2}{\partial y^2})^2w=q,\mbox{~for $0<x<h$ and $0<y<1$},
\end{align}
the boundary conditions for simply supported edges are
\begin{align}\label{eq4.2}
w=0,\ \frac{\partial^2w}{\partial y^2}=0, \mbox{~for~ $y=0$ or $y=1$},
\end{align}
and the boundary conditions for the other two edges are
\begin{align}\label{eq*}
w,\frac{\partial w}{\partial y}=\mbox{given functions, for $x = 0$ or $x = h$},
\end{align}
see \cite[Section 8.1]{YZL}.
To obtain the analytical solution of the above boundary value problem,
the key step is rewriting \eqref{eq4.1} and \eqref{eq4.2} into an operator equation, see \cite{Zhong}.
Let
\begin{align*}
u_1=&\frac{\partial^2w}{\partial x^2}+\frac{\partial^2w}{\partial y^2},
\ \ \ \ u_2=\frac{\partial^3w}{\partial x^2\partial y}+\frac{\partial^3w}{\partial y^3},\\
u_3=&\frac{\partial^3w}{\partial x^3}+\frac{\partial^3w}{\partial x\partial y^2},
\ u_4=-\frac{\partial^3w}{\partial x^2\partial y}-\frac{\partial^3w}{\partial y^3}.
\end{align*}
Then the boundary value problem \eqref{eq4.1},\eqref{eq4.2} becomes \cite[Example 6.3.1]{Wang}
\begin{align}\label{eq4.3}
&\frac{\partial}{\partial x}\left(
                             \begin{array}{c}
                               u_1 \\
                               u_2 \\
                               u_3 \\
                               u_4 \\
                             \end{array}
                           \right)=
                           \left(
                             \begin{array}{cccc}
                               0 & -\frac{\partial}{\partial y} & 1 & -\frac{\partial}{\partial y} \\
                               \frac{\partial}{\partial y} & 0 & \frac{\partial}{\partial y} & 1 \\
                               0 & 0 & 0 & \frac{\partial}{\partial y} \\
                               0 & 0 & -\frac{\partial}{\partial y} & 0 \\
                             \end{array}
                           \right)
                           \left(
                             \begin{array}{c}
                               u_1 \\
                               u_2 \\
                               u_3 \\
                               u_4 \\
                             \end{array}
                           \right)+
                           \left(
                             \begin{array}{c}
                               0 \\
                               0 \\
                               \frac{q}{D} \\
                               0 \\
                             \end{array}
                           \right),\\
\nonumber &u_1=u_3=0 \mbox{~for $y=0$ or $y=1$}.
\end{align}
Next we write \eqref{eq4.3} as an operator equation in a Hilbert space.
Let
\begin{align*}
\mathcal{D}(T_0):=&\{g\in L^2(0,\ 1)~|~g\in AC[0,\ 1], \ g'\in L^2(0,\ 1),\ g(0)=g(1)=0\}, \\
T_0g:=&g' \mbox{~for $g\in\mathcal{D}(T_0)$}.
\end{align*}
Then $T_0$ is a closed densely defined linear operator on the Hilbert space
$L^2(0,\ 1)$ and, furthermore, its adjoint operator is determined by \cite[Example III.5.31]{Ka1980}
\begin{align*}
\mathcal{D}(T_0^*):=&\{g\in L^2(0,\ 1)~|~g\in AC[0,\ 1], \ g'\in L^2(0,\ 1)\}, \\
T_0^*g:=&-g' \mbox{~for $g\in\mathcal{D}(T_0^*)$}.
\end{align*}
Let
$$A:=\left(
     \begin{array}{cc}
       0 & T_0^* \\
       T_0 & 0 \\
     \end{array}
   \right).$$
Then \eqref{eq4.3} becomes $\dot{u}=Hu+f$, where
\begin{align}\label{eq4.4}
H:=\left(
     \begin{array}{cc}
       A & A+1 \\
       0 & -A \\
     \end{array}
   \right)
\end{align}
is a Hamiltonian operator matrix on the Hilbert space $(L^2(0,\ 1))^4$
and
$$u:=(u_1\  u_2 \ u_3\  u_4)^t,\ f:=(0\ 0\ \frac{q}{D}\ 0)^t,$$
so that the spectral properties of the operator $H$ are essential for us to
get the analytical solution of the boundary value problem \eqref{eq4.1}, \eqref{eq4.2}, and \eqref{eq*}.
In the proof of the following proposition, the \emph{essential spectrum} of a closed operator $T$ is defined as \cite[Definition 2.1.9]{Tre}
$$\sigma_{ess}(T):=\{\lambda\in\mathbb C~|~(T-\lambda) \mbox{~is not Fredholm}\}.$$
\begin{proposition}
For the Hamiltonian operator matrix $H$ given by \eqref{eq4.4}, we have:
\begin{enumerate}
\item $H$ is symplectic self-adjoint,
\item $\sigma(H)=\sigma_p(H)=\{k\pi, k\in\mathbb Z\}$ consists of eigenvalues of finite algebraic multiplicity.
\end{enumerate}
\end{proposition}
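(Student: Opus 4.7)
Both parts of the proposition rest on a structural preliminary: the block operator $A$ is self-adjoint. My first step would be to verify this directly. Computing the adjoint block by block, using $T_0^{**} = T_0$ (as $T_0$ is closed), gives $\mathcal{D}(A^*) = \mathcal{D}(T_0) \times \mathcal{D}(T_0^*) = \mathcal{D}(A)$ and $A^*(w,z)^t = (T_0^* z, T_0 w)^t = A(w,z)^t$, so $A^* = A$. Consequently $\mathcal{D}(H) = \mathcal{D}(A) \times \mathcal{D}(A^*)$, and $\rho(A) \supset \mathbb{C} \setminus \mathbb{R} \ne \emptyset$, so the hypotheses of the Section \ref{sec3} results are in force.

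For (1), since $C = 0$ is $A$-bounded with relative bound $0$ trivially, Corollary \ref{cor3.1}(1) applies and yields $JH = (JH)^*$ at once. It is worth remarking that the perturbation result Theorem \ref{th1.1} does not suffice here: $B = A+1$ is $A^*$-bounded with relative bound exactly $1$, not strictly less than $1$, which is precisely the edge case the non-perturbative Corollary \ref{cor3.1} is designed to handle.

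For (2), my plan is first to compute $\sigma(A)$ and then transfer to $H$ via a Frobenius-Schur factorization. Solving $A(u,v)^t = \lambda(u,v)^t$ gives $-v' = \lambda u$, $u' = \lambda v$, $u(0) = u(1) = 0$; elimination yields $u'' + \lambda^2 u = 0$ with Dirichlet data, so $\sigma_p(A) = \{k\pi : k \in \mathbb{Z}\}$ (with $(0,c)^t$ covering $\lambda = 0$). Since $A^2 = \mathrm{diag}(T_0^* T_0, T_0 T_0^*)$ is a direct sum of regular Sturm--Liouville operators with compact resolvent, $A$ itself has compact resolvent, so $\sigma(A) = \sigma_p(A) = \{k\pi : k \in \mathbb{Z}\}$. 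Next, for $\lambda \in \rho(A) = \rho(-A)$, Lemma \ref{lem2.1}(1) applied to $H - \lambda$ with bottom-right block $-A$ collapses (thanks to $C = 0$, which kills the Schur complement correction and leaves $S_1(\lambda) = A - \lambda$) to
\begin{equation*}
H - \lambda = \begin{pmatrix} I & -(A+1)(A+\lambda)^{-1} \\ 0 & I \end{pmatrix} \begin{pmatrix} A-\lambda & 0 \\ 0 & -A-\lambda \end{pmatrix},
\end{equation*}
whose left factor is bounded and boundedly invertible and whose right factor has a compact inverse for $\lambda \in \rho(A)$. Hence $(H-\lambda)^{-1}$ exists and is compact, so $\sigma(H) \subseteq \{k\pi\}$ and $H$ has compact resolvent; by the standard Riesz theory every spectral value is then an isolated eigenvalue of finite algebraic multiplicity. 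The reverse inclusion is immediate from the triangular structure: any eigenvector $\phi$ of $A$ with $A\phi = k\pi\phi$ produces $(\phi, 0)^t \in \ker(H - k\pi)$.

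The main obstacle I anticipate is the domain bookkeeping required to establish $A = A^*$ as an unbounded block operator matrix, since the entire argument pivots on that identification (both for applying Corollary \ref{cor3.1} and for making sense of the Frobenius-Schur factorization). Once it is in hand, the collapse of the first Schur complement to $A - \lambda$ (courtesy of $C = 0$) renders the spectral analysis almost mechanical, and no separate treatment of $\sigma_{ess}(H)$ beyond the compact-resolvent conclusion is needed.
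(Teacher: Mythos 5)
Your proof is correct, and part (1) coincides with the paper's argument: both reduce to Corollary \ref{cor3.1} (or Corollary \ref{cor3.3}) via the self-adjointness of $A$ and the fact that $C=0$ has $A$-bound $0$; your remark that Theorem \ref{th1.1} is unavailable because $B=A+1$ has $A^*$-bound exactly $1$ is precisely the point of this example. Part (2), however, follows a genuinely different route. The paper never computes $\sigma(A)$ directly: it squares $H$ to get $H^2=\mathrm{diag}(A^2,A^2)$ with $A^2=\mathrm{diag}(T_0^*T_0,\,T_0T_0^*)$ the Dirichlet and Neumann Laplacians, reads off $\sigma(H^2)=\{(n\pi)^2\}$ and $\sigma_{ess}(H^2)=\emptyset$ from known Sturm--Liouville facts, shows $i\in\rho(H)$ from the bijectivity of $H^2+1=(H-i)(H+i)$, transfers back by the spectral mapping theorems for $\sigma$ and for $\sigma_{ess}$ (citing \cite{DS} and \cite{BG}), and finally invokes the symmetry of $\sigma_p(H)\cup\sigma_r(H)$ about the imaginary axis --- a consequence of symplectic self-adjointness, \cite{AHF} --- to decide which square roots of $(n\pi)^2$ actually occur. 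You instead exploit the upper-triangular structure ($C=0$): once $\sigma(A)=\{k\pi:k\in\mathbb Z\}$ and the compactness of the resolvent of $A$ are established, the degenerate Frobenius--Schur factorization gives $\rho(H)\supseteq\rho(A)=\rho(-A)$ together with compact resolvent for $H$, and the vectors $(\phi,0)^t$ with $A\phi=k\pi\phi$ supply the reverse inclusion. Your route is more elementary and self-contained (no essential-spectrum mapping theorem, no spectral symmetry theorem), but it is tied to the triangularity of this particular $H$ and to the symmetry $\sigma(A)=-\sigma(A)$, whereas the paper's squaring trick needs only that $H^2$ is diagonal and illustrates the general symplectic symmetry of the spectrum. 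The one step you should spell out is the inference that $A$ has compact resolvent because $A^2$ does: for self-adjoint $A$ this follows from $\lvert (A\pm i)^{-1}\rvert=(A^2+1)^{-1/2}$ together with the fact that a nonnegative bounded operator is compact if and only if its square is, but it is not automatic for general closed operators.
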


\begin{proof}
Since $A$ is self-adjoint, the first statement follows from Corollary \ref{cor3.1} or Corollary \ref{cor3.3}.
We start to prove the second statement.
A little calculation shows
$$H^2=\left(
      \begin{array}{cc}
        A^2 & 0 \\
        0 & A^2 \\
      \end{array}
    \right),
    A^2=\left(
        \begin{array}{cc}
          T_0^*T_0 & 0 \\
          0 & T_0T_0^* \\
        \end{array}
      \right).$$
Thus
\begin{align*}
&\sigma(H^2)=\sigma(A^2)=\sigma(T_0^*T_0)\cup\sigma(T_0T_0^*),\\
&\sigma_{ess}(H^2)=\sigma_{ess}(A^2)=\sigma_{ess}(T_0^*T_0)\cup\sigma_{ess}(T_0T_0^*).
\end{align*}
From \cite[Example V.3.25]{Ka1980} we see that the self-adjoint operators $T_0^*T_0, T_0T_0^*$ are determined by
\begin{align*}
\mathcal{D}(T_0^*T_0):=&\{g\in L^2(0,\ 1)~|~g,g'\in AC[0,\ 1], \ g''\in L^2(0,\ 1),\ g(0)=g(1)=0\}, \\
T_0^*T_0g:=&-g'' \mbox{~for $g\in\mathcal{D}(T_0^*T_0)$},\\
\mathcal{D}(T_0T_0^*):=&\{g\in L^2(0,\ 1)~|~g,g'\in AC[0,\ 1], \ g''\in L^2(0,\ 1),\ g'(0)=g'(1)=0\}, \\
T_0T_0^*g:=&-g'' \mbox{~for $g\in\mathcal{D}(T_0T_0^*)$},
\end{align*}
and so
\begin{align*}
&\sigma(T_0^*T_0)=\{(n\pi)^2, n=1, 2, 3,...\},\\
&\sigma(T_0T_0^*)=\{(n\pi)^2, n=0, 1, 2,...\}, \\
&\sigma_{ess}(T_0^*T_0)=\sigma_{ess}(T_0T_0^*)=\emptyset,
\end{align*}
so that
\begin{align}
&\sigma(H^2)=\sigma(T_0T_0^*)=\{(n\pi)^2, n=0, 1, 2,...\}, \label{eq4.5}\\
&\sigma_{ess}(H^2)=\emptyset.\label{eq4.6}
\end{align}
Moreover, $H^2+1$ is bijective since $H^2$ is a nonnegative self-adjoint operator,
so it follows from
$$H^2+1=(H-i)(H+i)=(H+i)(H-i)$$
that $(H-i)$ is bijective, and hence $i\in\rho(H)$.
Then, by \cite[Theorem VII.9.10]{DS} and \cite[Lemma 2]{BG}, respectively,
\begin{align}
&\sigma(H^2)=\{\lambda^2~|~\lambda\in\sigma(H)\}, \label{eq4.7}\\
&\sigma_{ess}(H^2)=\{\lambda^2~|~\lambda\in\sigma_{ess}(H)\}.\label{eq4.8}
\end{align}
Now \eqref{eq4.6} and \eqref{eq4.8} imply that $\sigma_{ess}(H)=\emptyset$ and,
furthermore,
$\sigma(H)$ consists of countably many isolated eigenvalues of finite algebraic multiplicity since $\rho(H)\neq\emptyset$ (see \cite[Theorem XVII.2.1]{GGK}).
But $H$ is symplectic self-adjoint,
so that $\sigma_p(H)=\sigma_p(H)\cup\sigma_r(H)$ is symmetric with respect to the imaginary axis (see \cite[Theorem 3.6]{AHF}),
and therefore $\sigma(H)=\sigma_p(H)=\{k\pi, k\in\mathbb Z\}$ by \eqref{eq4.5} and \eqref{eq4.7}.
\end{proof}

\begin{remark}
The assertion that $H$ is symplectic self-adjoint with $\sigma_c(H)\cup\sigma_r(H)=\emptyset$ is new.
\end{remark}

\begin{remark}
It has been proved that the root vector system of $H$ forms a Schauder basis with parentheses for $(L^2(0,\ 1))^4$,
so that the analytical solution of \eqref{eq4.3} could be obtained by expansion of root vectors,
see \cite[Example 6.3.1]{Wang} for details.
\end{remark}

\appendix
\section{some lemmas on adjoints}

\begin{lemma}(\cite[Corollary 1]{HK1970})\label{lemA1}
Let $T$ be a closed densely defined linear operator on a Hilbert space.
Suppose $S$ is a $T$-bounded operator such that $S^*$ is $T^*$-bounded, with both relative bounds $<1$.
Then $S+T$ is closed and $(S+T)^*=S^*+T^*$.
\end{lemma}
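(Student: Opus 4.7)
My plan is to break the proof into three stages: closedness of $S+T$ (and of $S^*+T^*$), the easy inclusion $S^*+T^*\subseteq(S+T)^*$, and finally the reverse inclusion $(S+T)^*\subseteq S^*+T^*$, which I expect to be the main obstacle.

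\textbf{Closedness.} First I would pick $a<1$ and $b\geq 0$ with $\|Sx\|\leq a\|Tx\|+b\|x\|$ for all $x\in\mathcal{D}(T)$. The triangle inequality then gives the reverse estimate $\|Tx\|\leq(1-a)^{-1}(\|(S+T)x\|+b\|x\|)$, so any sequence $(x_n)\subset\mathcal{D}(T)$ with $x_n\to x$ and $(S+T)x_n$ convergent forces $(Tx_n)$ to be Cauchy. Closedness of $T$ then gives $x\in\mathcal{D}(T)$ with $Tx_n\to Tx$, and the $T$-bound transfers this to $Sx_n\to Sx$, showing that $S+T$ is closed on the dense domain $\mathcal{D}(T)$. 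Since $T^*$ is closed and densely defined on the Hilbert space, the same argument applied to the pair $(T^*,S^*)$ shows $S^*+T^*$ is closed on $\mathcal{D}(T^*)$.

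\textbf{Easy inclusion.} The $T^*$-boundedness of $S^*$ forces $\mathcal{D}(S^*)\supseteq\mathcal{D}(T^*)$. Then for $y\in\mathcal{D}(T^*)$ and $x\in\mathcal{D}(T)$ a direct computation gives
$$((S+T)x,y)=(Sx,y)+(Tx,y)=(x,S^*y)+(x,T^*y)=(x,(S^*+T^*)y),$$
so $y\in\mathcal{D}((S+T)^*)$ with $(S+T)^*y=(S^*+T^*)y$, hence $S^*+T^*\subseteq(S+T)^*$.

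\textbf{Reverse inclusion, and the main obstacle.} What remains, and what I expect to be the hardest part, is $\mathcal{D}((S+T)^*)\subseteq\mathcal{D}(T^*)$. The naive attempt of extracting $(Tx,y)=((S+T)x,y)-(Sx,y)$ to exhibit $y\in\mathcal{D}(T^*)$ fails because we have no a priori control of $x\mapsto(Sx,y)$ in the $X$-norm. My plan is instead a continuation-in-parameter argument: set $T_\tau:=T+\tau S$ for $\tau\in[0,1]$ and
$$E:=\{\tau\in[0,1]:(T+\tau S)^*=T^*+\tau S^*\}.$$
Since the relative bounds of $\tau S$ with respect to $T$ and of $\tau S^*$ with respect to $T^*$ are $\tau$-scaled, the first two steps apply uniformly to each $T_\tau$, so $T_\tau$ and $T^*+\tau S^*$ are closed and $T^*+\tau S^*\subseteq T_\tau^*$. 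Clearly $0\in E$, so by connectedness of $[0,1]$ it suffices to show that $E$ is both open and closed. Closedness follows from the continuity of the relative bounds and the explicit graph-norm estimate from Step 1. For openness, I would write $T_\tau=T_{\tau_0}+(\tau-\tau_0)S$ for $\tau$ near $\tau_0\in E$; the perturbation $(\tau-\tau_0)S$ and its adjoint then have arbitrarily small relative bounds with respect to $T_{\tau_0}$ and $T_{\tau_0}^*$, which should let me run a Neumann-series expansion of the resolvent (when $\rho(T_{\tau_0})\neq\emptyset$) or, more robustly, a graph-norm approximation argument, yielding $T_\tau^*=T_{\tau_0}^*+(\tau-\tau_0)S^*=T^*+\tau S^*$. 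The real hurdle is precisely this openness step: it is here that both relative bounds being strictly less than $1$ is essential (a bound $=1$ on the adjoint side could enlarge $\mathcal{D}((S+T)^*)$ beyond $\mathcal{D}(T^*)$), and it is the core of Hess--Kato's original argument.
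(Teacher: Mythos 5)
The paper does not actually prove this lemma: it is quoted verbatim from Hess--Kato \cite[Corollary 1]{HK1970}, so there is no in-paper argument to compare yours against. Judged on its own merits, your Steps 1 and 2 (closedness of $S+T$ on $\mathcal{D}(T)$ and of $S^*+T^*$ on $\mathcal{D}(T^*)$, and the inclusion $S^*+T^*\subseteq(S+T)^*$) are correct and routine, and the continuation-in-$\tau$ strategy for the reverse inclusion is indeed the right skeleton --- it is essentially what Hess and Kato do.

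The genuine gap is the one you yourself flag: the openness of $E$ is asserted, not proved, and neither of the two routes you sketch closes it. The Neumann-series route needs $\rho(T_{\tau_0})\neq\emptyset$, which a general closed densely defined operator on a Hilbert space need not satisfy and which nothing in the hypotheses supplies. The ``graph-norm approximation argument'' is not an argument as written: observing that $(\tau-\tau_0)S$ and $(\tau-\tau_0)S^*$ have arbitrarily small relative bounds merely restates the lemma with ``$<1$'' replaced by ``small'', and the adjoint identity for small bounds is precisely what still has to be established. The missing ingredient is a graph/gap-metric argument: writing $G(T_\tau^*)=\bigl(VG(T_\tau)\bigr)^\perp$ with $V(u,v)=(v,-u)$, one considers the orthogonal sum $M_\tau:=VG(T_\tau)\oplus G(T^*+\tau S^*)$, which equals $H\times H$ at $\tau=0$ and depends continuously on $\tau$ in the gap metric by the uniform relative-bound estimates; since a closed subspace at gap $<1$ from the whole space must be the whole space, $M_\tau=H\times H$ for all $\tau$, which is equivalent to $(T+\tau S)^*=T^*+\tau S^*$. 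This is exactly where both relative bounds $<1$ and the Hilbert-space structure enter, and without it (or an equivalent) the proof is incomplete at its decisive step. A minor further point: your closedness-of-$E$ claim is also unjustified as stated, but it can be dispensed with entirely once openness is proved with a radius uniform in $\tau_0$, by stepping from $0$ to $1$ in finitely many steps.
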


\begin{lemma}\label{lemA2}(\cite[Problem III.5.26]{Ka1980})
Let $S$ be a bounded everywhere defined operator and $T$ be a densely defined operator on a Hilbert space.
Then $(ST)^*=T^*S^*$.
\end{lemma}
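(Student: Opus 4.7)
The plan is to verify the two operator inclusions $T^*S^* \subseteq (ST)^*$ and $(ST)^* \subseteq T^*S^*$ directly from the definition of the adjoint. A preliminary observation I would record is that, because $S$ is everywhere defined, $\mathcal{D}(ST)=\mathcal{D}(T)$, which is dense, so $(ST)^*$ is well defined; and because $S$ is bounded, the identity $(Su,v)=(u,S^*v)$ holds for all $u,v\in H$ with no domain restriction. These two facts will do all the real work.

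For $T^*S^*\subseteq(ST)^*$, I would take $y\in\mathcal{D}(T^*S^*)$, which by definition means $S^*y\in\mathcal{D}(T^*)$, and for an arbitrary $x\in\mathcal{D}(T)=\mathcal{D}(ST)$ compute
$$(STx,y)=(Tx,S^*y)=(x,T^*S^*y).$$
This immediately puts $y$ into $\mathcal{D}((ST)^*)$ with $(ST)^*y=T^*S^*y$, as desired.

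For the reverse inclusion, I would take $y\in\mathcal{D}((ST)^*)$ and note that for every $x\in\mathcal{D}(T)$,
$$(Tx,S^*y)=(STx,y)=(x,(ST)^*y),$$
so the linear functional $x\mapsto(Tx,S^*y)$ is bounded by $\|(ST)^*y\|\,\|x\|$ on $\mathcal{D}(T)$. The density of $\mathcal{D}(T)$ then yields $S^*y\in\mathcal{D}(T^*)$ and $T^*S^*y=(ST)^*y$, which is exactly the statement that $y\in\mathcal{D}(T^*S^*)$ with matching values.

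The only point requiring care is domain bookkeeping: one must resist tacitly assuming that $T$ is closed, that $T^*$ is densely defined, or that $ST$ is closed, since none of these appear among the hypotheses and none are actually needed. The argument is driven entirely by the two properties of $S$ (bounded and everywhere defined), which render each inner-product identity above legitimate and fully reversible, so I do not anticipate a genuine obstacle beyond this bookkeeping.
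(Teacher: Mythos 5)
Your proof is correct, and since the paper gives no proof of this lemma (it simply cites it as Problem III.5.26 of Kato, where it is left as an exercise), there is nothing to diverge from: your two-inclusion argument from the definition of the adjoint is the standard solution, and your domain bookkeeping --- in particular using that $S^*$ is everywhere defined so that $\mathcal{D}(T^*S^*)=\{y: S^*y\in\mathcal{D}(T^*)\}$, and that density of $\mathcal{D}(T)$ is only needed to make $T^*$ well defined --- is exactly right.
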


\begin{lemma}(\cite[Theorem 4.3]{AD})\label{lemA3}
Let $S$ and $T$ be densely defined operators on a Hilbert space.
If $T$ is closed and $\mathcal{R}(T)$ is closed and has finite codimension, then $ST$ is a
densely defined operator and $(ST)^*=T^*S^*$.
\end{lemma}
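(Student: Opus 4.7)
The plan is to prove, in order, (i) that $\mathcal{D}(ST)$ is dense and is a core for $T$, (ii) the routine inclusion $(ST)^{*}\supset T^{*}S^{*}$, and (iii) the reverse inclusion. The hypothesis on the codimension of $\mathcal{R}(T)$ enters only in (i) and (iii), and it is precisely what one needs to work around the fact that $\mathcal{D}(S)\cap\mathcal{R}(T)$ can a priori miss large pieces of $\mathcal{D}(S)$.

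For (i), I would first produce a finite-dimensional complement of $\mathcal{R}(T)$ sitting inside $\mathcal{D}(S)$. Since $\mathcal{R}(T)^{\perp}$ is finite-dimensional and $\mathcal{D}(S)$ is dense, one picks $v_{1},\dots,v_{n}\in\mathcal{D}(S)$ whose orthogonal projections onto $\mathcal{R}(T)^{\perp}$ form a basis, and sets $W:=\mathrm{span}\{v_{1},\dots,v_{n}\}$. The continuous skew-projection $\tilde{P}$ onto $\mathcal{R}(T)$ along $W$ then sends $\mathcal{D}(S)$ into $\mathcal{D}(S)\cap\mathcal{R}(T)$, proving that the latter is dense in $\mathcal{R}(T)$. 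Next, since $T$ is closed and $\mathcal{R}(T)$ is closed, the restriction $T_{0}:=T|_{\ker(T)^{\perp}\cap\mathcal{D}(T)}$ is a closed bijection onto the Hilbert space $\mathcal{R}(T)$, so the closed graph theorem yields a bounded inverse $T_{0}^{-1}:\mathcal{R}(T)\to\ker(T)^{\perp}$. Given $x\in\mathcal{D}(T)$, write $x=x_{1}+x_{2}$ along $\ker T\oplus\ker(T)^{\perp}$ (so $Tx=Tx_{2}$), approximate $Tx$ by $m_{n}\in\mathcal{D}(S)\cap\mathcal{R}(T)$, and set $x_{n}:=x_{1}+T_{0}^{-1}m_{n}$; then $x_{n}\in\mathcal{D}(ST)$, $x_{n}\to x$, and $Tx_{n}=m_{n}\to Tx$, which delivers both density and the core property in one stroke.

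Step (ii) is one line: for $y\in\mathcal{D}(T^{*}S^{*})$ and $x\in\mathcal{D}(ST)$ one has $(STx,y)=(Tx,S^{*}y)=(x,T^{*}S^{*}y)$.

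The main obstacle is (iii). Fix $y\in\mathcal{D}((ST)^{*})$ and set $z:=(ST)^{*}y$. For any $u\in\mathcal{D}(S)$, decompose $u=\tilde{P}u+(u-\tilde{P}u)$; by construction $u-\tilde{P}u\in W\subset\mathcal{D}(S)$ and $\tilde{P}u\in\mathcal{D}(S)\cap\mathcal{R}(T)$, so $x_{u}:=T_{0}^{-1}\tilde{P}u$ lies in $\mathcal{D}(ST)$ with $Tx_{u}=\tilde{P}u$. Consequently
\[
(Su,y)=(STx_{u},y)+(S(u-\tilde{P}u),y)=(x_{u},z)+(S(u-\tilde{P}u),y).
\]
The first summand is bounded in $u$ because $u\mapsto x_{u}$ is bounded via $T_{0}^{-1}\tilde{P}$, and the second is bounded in $u$ because $u\mapsto u-\tilde{P}u$ is bounded into the finite-dimensional space $W$, on which $S$ restricts to an automatically bounded operator. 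Hence $y\in\mathcal{D}(S^{*})$, and substituting back gives $(Tx,S^{*}y)=(x,z)$ for every $x\in\mathcal{D}(ST)$. The core property from (i) now extends this identity to all $x\in\mathcal{D}(T)$, which is precisely $S^{*}y\in\mathcal{D}(T^{*})$ with $T^{*}S^{*}y=z$, completing the proof.
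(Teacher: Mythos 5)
Your proof is correct. Note that the paper does not actually prove this lemma --- it is quoted verbatim from Azizov--Dijksma \cite[Theorem 4.3]{AD} and used as a black box in the proof of Theorem \ref{th3.1} --- so there is no in-paper argument to compare against; judged on its own, your argument is complete: the construction of a finite-dimensional complement $W\subset\mathcal{D}(S)$ of $\mathcal{R}(T)$ via density of $\mathcal{D}(S)$ and finite-dimensionality of $\mathcal{R}(T)^{\perp}$, the bounded inverse $T_{0}^{-1}$ from the closed graph theorem, the core property of $\mathcal{D}(ST)$ for $T$, and the boundedness of $u\mapsto(Su,y)$ obtained by splitting $u=\tilde{P}u+(u-\tilde{P}u)$ (using that $S|_{W}$ is automatically bounded) all check out, and together they yield both inclusions of $(ST)^{*}=T^{*}S^{*}$. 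This is essentially the standard Fredholm-type relaxation of the bounded-$S$ case recorded in Lemma \ref{lemA2}, and is in the spirit of the cited source.
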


{\textbf{Acknowledgment.}}~{\ This work was supported by
the Natural Science Foundation of China (Grant Nos. 11371185, 11101200, 11361034),
the Specialized Research Fund for the Doctoral Program of Higher Education of China (Grant No. 20111501110001),
the Major Subject of Natural Science Foundation of Inner Mongolia of China (Grant No. 2013ZD01),
and the Natural Science Foundation of Inner Mongolia of China (Grant No. 2012MS0105).
The authors wish to thank Gerald Teschl and Guolin Hou for many valuable comments.}

\end{document}